\documentclass[reqno]{amsart}
\usepackage[utf8]{inputenc}
\usepackage[T1]{fontenc}
\usepackage{lmodern}
\usepackage{hyperref}
\usepackage{amsmath}
\usepackage{amsfonts}
\usepackage{amssymb}
\usepackage{cancel}
\usepackage{color}
\usepackage{lipsum}

\numberwithin{equation}{section}
\newtheorem{theorem}{Theorem}[section]
\newtheorem{proposition}[theorem]{Proposition}

\newtheorem{lemma}{Lemma}[section]

\newtheorem{cor}{Corollary}[section]

\usepackage{graphicx}
\graphicspath{ {images/} }
\usepackage{color}
\sloppy
\definecolor{lightgray}{gray}{0.5}
\setlength{\parindent}{0pt}

\subjclass[2010]{60H05, 60H15, 60H30}
\keywords{Shadow Gierer-Meinhardt system,  multiplicative noise, global existence, linear Robin-Neumann boundary conditions}

\begin{document}
\title[Gierer-Meinhardt system with Robin-Neumann boundary conditions]{GLOBAL 
ANALYSIS OF THE SHADOW GIERER-MEINHARDT SYSTEM WITH GENERAL LINEAR BOUNDARY CONDITIONS IN A RANDOM ENVIRONMENT}
\author[Antwi-Fordjour, Kim, Nkashama]{}
\maketitle

\centerline{\scshape Kwadwo Antwi-Fordjour$^1$, Seonguk Kim$^2$, and Marius Nkashama$^3$}
\medskip
{
\footnotesize
\centerline{1) Department of Mathematics and Computer Science,}
 \centerline{Samford University,}
   \centerline{ Birmingham, Alabama 35229, USA.}
 }
 \medskip
{\footnotesize
\centerline{2) Department of Mathematics,}
 \centerline{DePauw University,}
   \centerline{Greencastle, Indiana 46135, USA.}
   }
\medskip
{\footnotesize
 \centerline{3) Department of Mathematics,}
 \centerline{University of Alabama at Birmingham,}
   \centerline{Birmingham, Alabama 35294, USA.}
   } 

\begin{abstract}
The global analysis of the shadow Gierer-Meinhardt system with multiplicative white noise and general linear boundary conditions is investigated in this paper.
For this reaction-diffusion system, we employ a fixed point argument to prove local existence and uniqueness.
Our results on global existence are based on \emph{a priori} estimates of solutions.
\end{abstract}

\section{Introduction}
In 1744, Trembley's discovery in developmental biology pointed out that fragments of the small, fresh water animal called hydra can regenerate into a complete animal \cite{Trembley}. Based on Turing's (1952) idea of ``diffusion-driven instability'' \cite{Turing}, Gierer and Meinhardt \cite{Gierer} in 1972 proposed a theory of biological pattern formation that placed special emphasis on certain striking features on developmental biology, in particular, they proposed a system to model the head formation in the hydra.
Mathematical modeling of biological spatial pattern formation
has become one of the most popular areas of investigation in applied mathematics in recent times.
Many models involved in these
biological phenomena are of the general reaction-diffusion type considered in
\cite{Turing, Takashi}. Several researchers have been able to provide great insights into the underlying mechanisms of biological processes
realized by the
Gierer-Meinhardt system of the following form.

\begin{equation}\label{original equation}
  \begin{cases}
 \vspace{3pt}
A_{t} =\epsilon^{2}\Delta A-A+\dfrac{A^{p}}{H^{q}+b} \quad &\text{in} \: D\times (0,T), \\
\vspace{3pt}
\tau H_{t} =d\Delta H-H+\dfrac{A^{r}}{H^{s}} \quad &\text{in} \: D\times (0,T),\\
\epsilon\dfrac{\partial A}{\partial\nu}+aA=0=\dfrac{\partial H}{\partial \nu} \quad &\text{on} \:\partial D\times (0,T),\\
H(x,0)=H_{0}(x)>0, \quad A(x,0)=A_{0}(x) \geq 0 \quad &\text{in} \: \overline{D},
\end{cases}
  \end{equation}
where $\epsilon>0,~d>0, ~\tau>0 , ~a\geq 0$, $b>0$ and $D \subset \mathbb{R}^{N}$ $(N\geq 1)$ is a bounded domain with a smooth boundary $\partial D$, and $A$ and $H$ are activator and inhibitor, respectively; $\Delta$ is the Laplace or diffusion operator in $\mathbb{R}^{N}$ acting on $A$ and $H$; $\nu(x)$ is the unit outer normal vector at $x \in \partial D $, $\partial/\partial\nu:=\nabla\cdot\nu$ is the directional derivative in the direction of the vector $\nu$. The reaction exponents $p,q,r,$ and $s$ are positive, and satisfy $(p-1)(s+1)<qr$. The constants $\epsilon$ and $d$ are the diffusion coefficients for the activator and inhibitor respectively. The constant $b$ provides additional support to the inhibitor and may be thought of as a measure of the effectiveness of the inhibitor  in suppressing the production of the activator. The time relaxation constant $\tau$ plays a significant role on the stability of the system.  The two chemical substances $A$ and $H$, representing the concentrations of certain biochemicals, are initially produced by an outside source. Then they interact as represented by the coupled nonlinear terms in the system (see e.g. \cite{Masuda} and references therein).

There are several results for equation \eqref{original equation} with homogeneous linear Neumann boundary conditions (i.e.,
$a = 0$ and $b=0$) in \cite{Rothe, Masuda,  Dillon, Henghui, Jiang} and references therein. Chen et al. \cite{Chen} studied the generalized (singular) Gierer-Meinhardt system with Dirichlet boundary conditions. Recently, Antwi-Fordjour and Nkashama \cite{Antwi} studied the  global existence of \eqref{original equation}.
It is well known that it  is quite challenging to study the solvability of the equation \eqref{original equation} since it
does not have a standard variational structure.

One way to initiate the study of \eqref{original equation} is to first examine the shadow
system suggested by Keener \cite{Keener}. Shadow systems are mostly employed to approximate the reaction-diffusion
systems when one of the diffusion coefficients is large. Indeed, when the diffusion coefficient  of the second equation in \eqref{original equation}  is sufficiently large; that is,
$d \rightarrow \infty$, and $\gamma(t)$ is the formal limit of $H(x,t)$, then the system \eqref{original equation} can be reduced to the shadow Gierer-Meinhardt system:
\begin{equation}\label{original shadow equation}
 \begin{cases}
A_{t} =\epsilon^{2}\Delta A-A+\dfrac{A^{p}}{\gamma^{q}+b} \quad &\text{in} \: D\times (0,T), \\
\tau \gamma^{\prime} =-\gamma+\dfrac{\overline{A^{r}}}{\gamma^{s}} \quad &\text{in} \:  (0,T),\\
\epsilon\dfrac{\partial A}{\partial\nu}+aA=0 \quad &\text{on} \:\partial D\times (0,T),\\
\gamma(0)=\gamma_{0}>0  \: \text{in} ~~  \mathbb{R}, \qquad A(x,0)=A_{0}(x) \geq 0 \quad &\text{in} \: \overline{D},
  \end{cases}
  \end{equation}
where we define
$$\gamma(t):=\frac{1}{|D|}\int_{\Omega}H(x,t)dx,\qquad \overline{A^{r}}(t):=\frac{1}{|D|}\int_{\Omega}A^{r}(x,t)dx,$$
and $|D|$ is the (Lebesgue) measure of $D$. It is important to note here that, the second equation is a nonlocal  ordinary differential equation.

Global existence and finite-time blow-up for equation \eqref{original shadow equation} have been investigated by Li and Ni \cite{Li} when $a=b=0$, provided we have $\frac{p-1}{r}<\frac{2}{N+2}$. Phan \cite{Phan} studied the global existence of solutions for $a=b=0$ in  \eqref{original shadow equation} provided $\frac{p-1}{r}=\frac{2}{N+1}$ . Maini et al. \cite{Maini} studied the stability of spikes for \eqref{original shadow equation} with $b=0$.

Physical and biological systems are inevitably affected by random fluctuations from the environment. It is therefore important to incorporate the random effects from the environment into \eqref{original shadow equation}. In stochastic modeling, these random effects are conceived as stochastic fluctuations.

Motivated by the work of Kelkel and Surulescu \cite{Kelkel} and Winter et al.~\cite{Matthias}, we consider the following stochastic shadow Gierer-Meinhardt system:
\begin{equation}\label{main equation}
 \begin{cases}
 A_t =\epsilon^{2}\Delta A-A+\dfrac{A^{p}}{\gamma^{q}+b} \quad &\text{in} \: D\times (0,T), \\
\tau d\gamma =-\gamma dt+\dfrac{\overline{A^{r}}}{\gamma^{s}}dt+\sqrt{\eta}\gamma dB_{t} \quad &\text{in} \:  (0,T),\\
\epsilon\dfrac{\partial A}{\partial\nu}+aA=0 \quad &\text{on} \:\partial D\times (0,T),\\
\gamma(0)=\gamma_{0}>0  \: \text{in} ~~  \mathbb{R} , \qquad A(x,0)=A_{0}(x) \geq 0 \quad &\text{in} \: \overline{D},
  \end{cases}
  \end{equation}
where  $\eta>0$ is small and represents the noise intensity, and $B_{t}$ is a white noise (or statistically Brownian motion at time $t$).

Analytical results for the equation \eqref{main equation}  were obtained  with   Neumann boundary conditions ($a=0$) but there is a lack of theoretical considerations for the problem with general linear boundary conditions (see e.g. \cite{Matthias,  Li 1}). Thus, investigating the equation \eqref{main equation} with general linear boundary conditions of Robin-Neumann type plays an important role in understanding various kinds of biological phenomena.

\paragraph{To the best of our knowledge, this appears to be the first paper on stochastic shadow Gierer-Meinhardt system with general linear boundary conditions of Robin-Neumann type. In this paper, motivated by \cite{Matthias} and the above considerations, we shall prove the following main result on the global existence of strong positive solutions for the problem with general linear boundary conditions of Robin-Neumann type.}

\begin{theorem}\label{theorem1}
Suppose that $D\subset\mathbb{R}^N$ is a  bounded domain with a smooth boundary $\partial D$, and assume that the exponents satisfy the inequality
$$\frac{p-1}{r}<\min\Big\{\frac{2}{N+2}, \frac{q}{s+1}\Big\}.$$ Let $A_{0} \in W^{2,l}(D)$ where $l>\max\{N,2\}$, and $\gamma_0\in\mathbb{R}$ with $\gamma_0>0$. Then, with probability $1$, there is a unique solution $(A(x,t),\gamma(t))$ of the stochastic shadow equation \eqref{main equation}  which exists globally. Moreover, the component $\gamma$ satisfies the estimate
\begin{equation}\label{estimate for gamma}
\gamma(t)\geq \Big(\frac{\eta}{\tau}\Big)^{\frac{1}{s+1}} e^{-\frac{3}{2\eta}t-\frac{1}{\sqrt{\eta}} |B_{t}|}\gamma_{0}.
\end{equation}
\end{theorem}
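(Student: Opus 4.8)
The plan is to prove this theorem in two logically distinct stages: first establishing local existence and uniqueness via a fixed-point argument, then promoting the local solution to a global one through a priori estimates. The stochastic shadow system (3) couples a deterministic reaction-diffusion PDE for $A$ with a scalar nonlocal stochastic differential equation for $\gamma$. The key structural observation is that, for $\gamma$ fixed as a positive (adapted) process, the $A$-equation is a semilinear parabolic equation whose nonlinearity $A^p/(\gamma^q+b)$ is locally Lipschitz in $A$ on the relevant function space, thanks to the strictly positive denominator. Conversely, for $A$ fixed, the $\gamma$-equation is a one-dimensional SDE with multiplicative noise. So I would set up a Banach fixed-point (contraction mapping) scheme on a product space, iterating between solving the $A$-equation (using analytic semigroup theory for the Robin-Neumann operator $\mathcal{A} = \epsilon^2\Delta$ with boundary condition $\epsilon\,\partial A/\partial\nu + aA = 0$) and solving the $\gamma$-SDE, on a short random time interval. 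The choice $A_0 \in W^{2,l}(D)$ with $l > \max\{N,2\}$ is designed precisely so that $W^{2,l}(D) \hookrightarrow C^1(\overline{D})$ (Sobolev embedding), giving the classical/strong regularity and a uniform pointwise bound on $A$ needed to close the estimates.

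For the lower bound (4) on $\gamma$, which is the explicit quantitative claim, my approach would be to exploit the fact that the $\gamma$-dynamics are driven from below by a geometric Brownian motion. Writing the SDE
\begin{equation*}
\tau\, d\gamma = \Big(-\gamma + \frac{\overline{A^r}}{\gamma^s}\Big)dt + \sqrt{\eta}\,\gamma\, dB_t,
\end{equation*}
I would drop the nonnegative production term $\overline{A^r}/\gamma^s \geq 0$ to obtain a comparison with the linear multiplicative SDE $\tau\, d\underline{\gamma} = -\underline{\gamma}\,dt + \sqrt{\eta}\,\underline{\gamma}\,dB_t$. The latter is solved explicitly by the stochastic (Doléans-Dade) exponential, yielding a factor $\exp\big(-(1/\tau + \eta/(2\tau^2))t + (\sqrt{\eta}/\tau)B_t\big)$; a comparison theorem for one-dimensional SDEs (since both have the same diffusion coefficient and the drift of $\gamma$ dominates) then gives $\gamma(t) \geq \underline{\gamma}(t)$ almost surely. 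Matching the explicit constants, absorbing them into the stated bound $(\eta/\tau)^{1/(s+1)}e^{-\frac{3}{2\eta}t - \frac{1}{\sqrt{\eta}}|B_t|}\gamma_0$ and using $B_t \geq -|B_t|$, produces (4); the exponent $1/(s+1)$ and the coefficient $(\eta/\tau)^{1/(s+1)}$ suggest the authors actually use a sharper argument tuned to the $\gamma^{-s}$ nonlinearity rather than merely discarding it, so I would verify whether a power-substitution $\phi = \gamma^{s+1}$ linearizes the drift and reproduces these exact constants. Crucially, this strictly positive lower bound guarantees that $\gamma$ never hits zero, so the singular term $\gamma^{-s}$ stays finite and the denominator $\gamma^q + b$ stays bounded below, which is exactly what prevents blow-up in the coupling.

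The global existence then follows by deriving a priori bounds that rule out finite-time explosion of the $W^{2,l}$-norm of $A$ and of $\gamma$. Here the structural hypothesis $\frac{p-1}{r} < \min\{\frac{2}{N+2}, \frac{q}{s+1}\}$ does the work: the bound $\frac{p-1}{r} < \frac{2}{N+2}$ is the classical subcriticality condition (as in Li--Ni) controlling the growth of $A^p$ against the dissipation and the parabolic smoothing, while $\frac{p-1}{r} < \frac{q}{s+1}$ balances the activator growth against the inhibitor's stabilizing denominator $\gamma^q$, using the lower bound on $\gamma$ from (4). The strategy is to obtain an $L^\infty$ (or $W^{2,l}$) bound on $A$ on any finite interval $[0,T]$ by bootstrapping: estimate $\overline{A^r}$, feed it into the $\gamma$-equation to control $\gamma$ from above, then use semigroup/maximal-regularity estimates on the $A$-equation with the now-controlled nonlinearity, iterating until the norms are shown to remain finite. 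A continuation argument then extends the local solution to all of $[0,\infty)$ with probability one.

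I expect the main obstacle to be the a priori bound that closes the global argument, specifically controlling the coupled feedback: $A$ feeds $\gamma$ through the nonlocal average $\overline{A^r}$, while $\gamma$ feeds back into the $A$-nonlinearity. The stochastic term makes the bounds pathwise-random, so all estimates must be carried out $\omega$-by-$\omega$ (conditioning on the Brownian path), and the constants in the a priori estimates will depend on $\sup_{[0,T]}|B_t|$, which is almost surely finite. Reconciling the precise exponents in the subcriticality condition with the Sobolev-embedding thresholds — ensuring the nonlinear term maps $W^{2,l}$ into itself with the right continuity and that no critical borderline case is hit — is where the delicate bookkeeping lies.
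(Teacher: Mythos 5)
Your first two stages track the paper closely: the local theory is indeed a contraction argument on a product space using the mild formulations for $A$ and $\gamma$ (Proposition \ref{local existence}), and your second guess for the lower bound \eqref{estimate for gamma} is the right one --- the paper applies It\^o's formula to the power $\gamma^{s+1}$, which turns the singular production term $\overline{A^r}/\gamma^{s}$ into a nonnegative additive contribution that can be discarded, yielding exactly the stated constants (Lemma \ref{Lemma F1}). The comparison-with-geometric-Brownian-motion route you propose first would also give pathwise positivity of $\gamma$, but not the estimate \eqref{estimate for gamma} as written, so it is good that you flagged the power substitution as the likelier mechanism.

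The genuine gap is in the global stage, which is the heart of the theorem. Your plan --- estimate $\overline{A^r}$, feed it into the $\gamma$-equation to control $\gamma$ from above, then use semigroup or maximal-regularity estimates on the $A$-equation and iterate --- is circular: the very first step requires a bound on $A$, which is what the whole argument is supposed to produce, and iterating local-in-time estimates cannot by itself rule out finite-time blow-up. What the paper does, and what your outline is missing, is an \emph{unconditional} entry-point estimate that presupposes no control on $A$ whatsoever: applying It\^o's formula to $\gamma^{-\delta}$ (Lemma \ref{Lemma F11}), the production term enters with the favorable sign $-\delta\,\overline{A^r}/\gamma^{s+1+\delta}\,dt$, so the space-time integral of $h_{\delta}=A^{r}/\gamma^{s+1+\delta}$ is bounded pathwise by quantities depending only on $\gamma_{0}$, the lower bound \eqref{estimate for gamma}, and a stochastic integral handled by Doob's inequality and It\^o isometry. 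This $\|h_{\delta}\|_{L^{1}}$ bound is then precisely the integrable coefficient that closes the energy estimate for $h_{\alpha,\beta}=\int_{D}A^{\alpha}\gamma^{-\beta}\,dx$ (Lemma \ref{Lemma F2}): one writes $A^{\alpha+p-1}/\gamma^{q}=(h_{\delta})^{\kappa}A^{\alpha}$ with $\kappa=(p-1)/r=q/(s+1+\delta)$, which is where the hypothesis $(p-1)/r<q/(s+1)$ is used (it fixes $\delta>0$); H\"older, Gagliardo--Nirenberg with $\theta=N\kappa/2$, and Young's inequality then absorb the gradient term coming from integration by parts (the Robin boundary term has a good sign since $a\geq 0$), and the hypothesis $(p-1)/r<2/(N+2)$ is exactly what makes $\kappa/(1-\theta)\leq 1$, so the resulting coefficient $v(t)$ is integrable in time and a Gronwall-type integration gives $h_{\alpha,\beta}(t)\leq C(T)$ (Lemma \ref{Lemma F4}). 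Naming the two exponent conditions ``subcriticality'' and ``balance'' does not substitute for identifying this mechanism; without it, or an equivalent replacement, your proposal does not close.
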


The paper is organized as follows. In Section 2 we show the unique local existence of solutions. In Section 3 we prove global existence of positive solutions.

\section{Unique Local Existence }
In this section, we use several concepts from probability theory and semigroup of linear operators theory
 (see e.g. \cite{Billingsley, Durrett, Resnick, Henry, Pazy, Friedman}) along with estimates obtained herein and a fixed point argument to prove the unique local existence of positive solutions.
 Let us consider the (standard) probability space $(\Omega, \mathbb{F}, \mathbb{P} )$ where $\Omega$ is the sample space, $\mathbb{F}$ is the $\sigma$-algebra, $\mathbb{P}$ is the probability measure and define
\begin{equation}\label{brownian bound}
B^{*}_{t}:=\sup_{0\leq s\leq t}|B_{s}|,~~\forall t>0,\;\text{and}\; \tau_{K}(\omega):=\inf\{t>0:|B_{t}(\omega)|\geq K\}, ~\omega\in\Omega.
\end{equation}
Note that $\tau_{K}(\omega)$ denotes an optional stopping time (see e.g. \cite{Durrett} for background).
It is easy to see that
\begin{align}\label{set E}
E^{c}:=\{\omega\in\Omega:\tau_{K}(\omega)\leq t\}=\{\omega\in\Omega: B^{*}_{t}(\omega)\geq K\}.
\end{align}
Since the distribution of $B^{*}_{t}$ is a normal distribution function, we can ascertain that for sufficiently large $K>0$, we have that
$$\mathbb{P}(E^{c})<\frac{C}{K^{2}}\ll1,~~C>0;$$
which means that we can think of the complement $E^{c}$ as a negligible set.
Next,  we define the following operators;
\begin{equation}
S(t):=e^{-(-\epsilon^2\Delta+I )t} \;\text{and}\; R(t,B_t):=e^{-\frac{3}{2\eta}t+\frac{1}{\sqrt{\eta}}B_{t}}.
\end{equation}
Notice that here $S(t)$ denotes the semigroup associated with the Laplace operator subject to homogeneous Robin-Neumann boundary conditions where $(-\epsilon^2\Delta+I)$ is a strongly elliptic operator.

Consider the function space
$$C(\overline{D}, \mathbb{R})=\{f:\overline{D}\to \mathbb{R}|~ \mbox{$f$ is  a continuous function}\}$$
endowed with the sup-norm
\begin{equation}\label{def of continuous norm 1}
\|f\|_{C}=\sup_{x\in \overline{D} }|f(x)|.
\end{equation}
It follows that
\begin{equation}\label{estimate for the norm C}
\|S(t)f\|_{C}\leq \|f\|_{C},\quad \||f|^{p}\|_{C}\leq \|f\|_{C}^{p},~~p\geq 1,~f \in C(\overline{D}, \mathbb{R}).
\end{equation}
We also consider the following operator norm (on the appropriate space):
\begin{equation}
\|(A,\gamma)\|_{C([0,T];C\times  \mathbb{R})}:=\|A\|_{C([0,T];C)}+ \|\gamma\|_{C([0,T];  \mathbb{R})}.
\end{equation}
Finally, we define
$$x\wedge y :=\min\{x,y\}\;\text{and}\; x \vee y:= \max\{x,y\}\quad\text{for}\; x,y \in \mathbb{R}.$$
Based on the aforementioned preliminaries, we shall prove the following result on local existence and uniqueness of solutions to equation \eqref{main equation}.

\begin{proposition}\label{local existence}
For every $K>0$ there exists $T=T(K)>0$ such that for all $\omega$ in $E\subset \Omega$ as defined in \eqref{set E}, equation \eqref{main equation} has a unique solution $(A,\gamma) \in C([0,T \wedge \tau_K ];C(D, \mathbb{R})\times \mathbb{R})$ such that for all $t \in [0, T\wedge \tau_K]$, with $\overline\gamma(t)$ defined by $\overline{\gamma}(t):=\frac{\tau}{\eta}\gamma(t)$,
\begin{align}\label{operator setting 1}
&A(t)=S(t)A_{0}+\int_{0}^{t}S(t-u)\Big(\frac{A^{p}(u)}{\gamma^{q}(u)+b}\Big)du,
\end{align}
\begin{align}\label{operator setting 2}
&\overline{\gamma}(t)=R(t,B_t)\gamma_{0}+\int_{0}^{\frac{t}{\eta}}R(t-u,B_t-B_u) \Big(\frac{\overline{A^{r}}(u)}{\gamma^{s}(u)}\Big)du.
\end{align}
\end{proposition}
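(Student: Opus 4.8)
The plan is to recast the system \eqref{main equation} in its mild (integral) form \eqref{operator setting 1}--\eqref{operator setting 2} and to solve the resulting fixed point equation by the Banach contraction principle on a suitable complete metric space. First I would fix $K>0$ and work on the stopped interval $[0,T\wedge\tau_K]$, on which \eqref{brownian bound} gives $|B_t|<K$, so the multiplier $R(t,B_t)=e^{-\frac{3}{2\eta}t+\frac{1}{\sqrt{\eta}}B_t}$ is bounded above and below by positive constants depending only on $K,\eta,T$. I would then introduce the closed, bounded, convex set
\[
M=\{(A,\gamma)\in C([0,T\wedge\tau_K];C(\overline{D},\mathbb{R})\times\mathbb{R}):0\leq A,\ \gamma\geq m>0,\ \|(A,\gamma)\|\leq\rho\},
\]
where $m$ and $\rho$ are fixed below. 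The positive lower bound $\gamma\geq m$ is indispensable, since the nonlinearities $A^p/(\gamma^q+b)$ and $\overline{A^r}/\gamma^s$ involve negative powers of $\gamma$.

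Next I would define the map $\Phi=(\Phi_1,\Phi_2)$ by the right-hand sides of \eqref{operator setting 1}--\eqref{operator setting 2} and verify the three hypotheses of the contraction principle. For \textbf{positivity and the lower bound}: since $S(t)$ is the positivity-preserving semigroup generated by $-(-\epsilon^2\Delta+I)$ under homogeneous Robin-Neumann conditions and the integrand of $\Phi_1$ is nonnegative on $M$, we get $\Phi_1(A,\gamma)\geq 0$; likewise $R>0$ and the $\Phi_2$-integrand is nonnegative on $M$, so $\overline\gamma(t)\geq R(t,B_t)\gamma_0$, whence $\gamma(t)=\tfrac{\eta}{\tau}\overline\gamma(t)\geq\tfrac{\eta}{\tau}R(t,B_t)\gamma_0$. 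On $[0,T\wedge\tau_K]$ this last quantity is bounded below by a positive constant $m=m(K)$, so the lower bound is preserved (this is the positivity underlying \eqref{estimate for gamma}, whose sharp form belongs to the global analysis). For \textbf{invariance} $\Phi(M)\subseteq M$: using \eqref{estimate for the norm C}, the contraction bound $\|S(t)A_0\|_C\leq\|A_0\|_C$, and estimating the Duhamel integrals by the suprema of the nonlinearities over $M$ (finite thanks to $\gamma\geq m$ and $b>0$), one obtains $\|\Phi(A,\gamma)\|\leq C_0+T\,C_1(\rho,m)$; taking $\rho$ large relative to the data and then $T$ small yields $\Phi(M)\subseteq M$.

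For \textbf{contraction}, given $(A_i,\gamma_i)\in M$ I would estimate the difference of the nonlinear integrands. On $M$ both maps $(A,\gamma)\mapsto A^p/(\gamma^q+b)$ and $(A,\gamma)\mapsto\overline{A^r}/\gamma^s$ are Lipschitz, with constants depending on $\rho,m,b$, because $A$ is bounded and $\gamma$ is bounded away from $0$; one factors the differences of powers via $x^k-y^k=(x-y)\sum_{j=0}^{k-1}x^jy^{k-1-j}$ and controls $\gamma_1^{-s}-\gamma_2^{-s}$ using $\gamma_i\geq m$. Combining these with $\|S(t-u)\|\leq 1$ and the two-sided bounds on $R$, the operator difference is controlled by $T\,L(\rho,m)\,\|(A_1,\gamma_1)-(A_2,\gamma_2)\|$; shrinking $T=T(K)$ so that $T\,L<1$ makes $\Phi$ a contraction, and the Banach fixed point theorem delivers a unique $(A,\gamma)\in M$ satisfying \eqref{operator setting 1}--\eqref{operator setting 2}.

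The main obstacle I anticipate is maintaining the \emph{uniform positive lower bound} on $\gamma$, which is precisely what renders the negative-power nonlinearities well-defined and Lipschitz. This is where the explicit multiplier $R(t,B_t)$ and the stopping time $\tau_K$ do the essential work: the inequality $\overline\gamma(t)\geq R(t,B_t)\gamma_0$ together with $|B_t|<K$ guarantees $\gamma\geq m(K,\eta,\tau,T,\gamma_0)>0$ throughout the interval, closing the self-mapping and contraction estimates on $E$ as defined in \eqref{set E}.
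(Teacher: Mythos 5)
Your proposal is correct and follows essentially the same route as the paper: both recast \eqref{main equation} in the mild form \eqref{operator setting 1}--\eqref{operator setting 2} and apply the Banach fixed point theorem on a complete metric space of pairs $(A,\gamma)$ that builds in a positive lower bound $\gamma \geq m(K)>0$ (coming from $\overline{\gamma}(t)\geq R(t,B_t)\gamma_0$ and $|B_t|\leq K$ on $[0,T\wedge\tau_K]$) together with a norm bound, proving invariance and contraction by taking $T=T(K)$ small. The only cosmetic difference is that you factor differences of powers algebraically while the paper uses convex combinations $A_\lambda=\lambda A_1+(1-\lambda)A_2$ and $\gamma_\lambda$ to obtain the same Lipschitz estimates.
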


\begin{proof}
Without loss of generality, we assume in what follows that the constants $\tau=\eta=1$; which implies that $\overline\gamma(t)$ reads $\gamma(t)$. For every $\omega \in E \subset \Omega$, we first define the space
\begin{align}
\nonumber \mathbb{D}(T,K,L,\omega):=\Big\{&(A(\omega),\gamma(\omega)\in C\big([0,T\wedge \tau_K(\omega)], C(D, \mathbb{R})\times (0,\infty)\big):\\
\nonumber			&
\gamma(\omega, t)\geq e^{-\frac{3}{2}-K}\gamma_{0}, ~A(0)=A_{0},~ \gamma(0)=\gamma_{0},\\
			&\|(A,\gamma)(\omega)\|_{C([0,T\wedge \tau_K(\omega), C\times (0,\infty)])}\leq L\Big\},
\end{align}
where $T\in (0,1]$ depends on $K,L,A_0, \gamma_{0}$ with
\begin{align}\label{constant M}
L>2+\|A_{0}\|_C+e^{K}\gamma_{0}.
\end{align}
We simply denote $\mathbb{D}(T,K,L,\omega)$ by $\mathbb{D}$ (and drop all $\omega$). Next, we define the distance between $(A_{1},\gamma_{1}), (A_{2},\gamma_{2}) \in C\big([0,T\wedge \tau_K(\omega)], C(D, \mathbb{R})\times  \mathbb{R}\big)$ by
\begin{align}
d\Big((A_{1},\gamma_{1}), (A_{2},\gamma_{2})\Big):=\big\|(A_{1}-A_{2},\gamma_{1}-\gamma_{2})\big\|_{C\big([0,T\wedge \tau_K(\omega)], C\times  \mathbb{R}\big)}.
\end{align}
It is clear that the $\mathbb{D}$ is a closed metric space with the metric $d$; that is, $\mathbb{D}$ is a complete metric space. Now, consider
\begin{align}\label{operator set up 1}
& F_{1}(A,\gamma)(t):=S(t)A_{0}+\int_{0}^{t}S(t-u)\Big(\frac{A^{p}(u)}{\gamma^{q}(u)+b}\Big)du,
\end{align}
\begin{align}\label{operator set up 2}
&F_{2}(A,\gamma)(t):=R(t,B_t)\gamma_{0}+\int_{0}^{t}R(t-u,B_t-B_u) \Big(\frac{\overline{A^{r}}(u)}{\gamma^{s}(u)}\Big)du,
\end{align}
and
\begin{align}
F(A,\gamma)(t):=\big(F_{1}(A,\gamma)(t), F_{2}(A,\gamma)(t)\big).
\end{align}
In order to use the Banach fixed point theorem (i.e., the contraction mapping theorem) which guarantees the existence of a local unique pair of  solutions (i.e., a fixed-point) to \eqref{operator setting 1} and \eqref{operator setting 2}, we shall prove the following:
\begin{enumerate}
\item There exists $T:=T(K,L,\|A_{0}\|_{C},\gamma_{0})>0 $ such that
\begin{align}
F(A,\gamma)(t)\in \mathbb{D} \; \text{whenever}\; (A,\gamma)\in \mathbb{D}.
\end{align}
\item There exists $T:=T(K,L,\|A_{0}\|_{C},\gamma_{0})>0 $ such that
\begin{align}\label{distance of F}
d\Big(F(A_{1},\gamma_{1}),F(A_{2},\gamma_{2})\Big)\leq \frac{1}{2} d\Big((A_{1},\gamma_{1}),(A_{2},\gamma_{2})\Big),
(A_{i},\gamma_{i})\in \mathbb{D},~i=1,2.
\end{align}
\end{enumerate}
We first show (1). It is clear that
\begin{align}
F(A,\gamma)(0)=(A_{0},\gamma_{0}).
\end{align}
Now, let $(A,\gamma)\in \mathbb{D}$ be given. By using \eqref{def of continuous norm 1} and \eqref{operator set up 1}, we get
\begin{align}\label{inequality of F11}
\nonumber\|F_{1}(A,\gamma)\|_{C([0,t];C)}\leq&\|A_{0}\|_{C}+ b^{-1}\int_{0}^{t}\|A(u)\|_{C}^{p}du\\
				      \leq& \|A_{0}\|_{C}+  b^{-1}L^{p}t.
\end{align}
and by  \eqref{operator set up 2}, we obtain
\begin{align}\label{inequality of F22}
\nonumber \|F_{2}(A,\gamma)\|_{C([0,t];\mathbb{R})}\leq&e^{-\frac{3}{2}t+B_{t}}\gamma_{0}+\gamma_{0}^{-s}\int_{0}^{t}e^{-\frac{3}{2}(t-u)+B_{t}-B_{u}}\|A(u)\|_{C}^{p}du\\
				      \leq& e^{K}\gamma_{0}+e^{\frac{3}{2}s+Ks+2K}L^{p}t.
\end{align}
Setting
$$T_1:= bL^{-p}, ~T_2:= e^{-\frac{3}{2}s-Ks-2K}L^{-p}\;\text{and}\;\hat{T}:=\min\{T_1,T_2\},$$
it follows from \eqref{constant M}, \eqref{inequality of F11} and \eqref{inequality of F22} that
$$\|F(A,\gamma)\|_{C([0,T\wedge \tau_K, C\times (0,\infty)])}\leq 2+\|A_{0}\|_{C}+e^{K}\gamma_{0}\leq L,$$
which implies immediately that $F(A,\gamma) \in \mathbb{D}.$

Next, let us show (2). Indeed, for all $(A_{1},\gamma_{1}), (A_{2},\gamma_{2}) \in \mathbb{D}$,
\begin{align}
\nonumber&\|F_{1}(A_{1},\gamma_{1})-F_{1}(A_{2},\gamma_{2})\|_{C([0,t];C)}\leq \int_{0}^{t}\Bigg\|\frac{A_{1}^{p}(u)}{\gamma_{1}^{q}(u)+b}-\frac{A_{2}^{p}(u)}{\gamma_{2}^{q}(u)+b}\Bigg\|_{C}du\\
\leq&\int_{0}^{t}\frac{\|A_{1}^{p}(u)-A_{2}^{p}(u)\|_{C}}{\gamma_{1}^{q}(u)+b}du
+\int_{0}^{t}\|A_{2}^{p}(u)\|_{C}\Bigg|\frac{1}{\gamma_{1}^{q}(u)+b}-\frac{1}{\gamma_{2}^{q}(u)+b}\Bigg |du.
\end{align}
Now, let us estimate the first term. Considering the convex combination
$$A_{\lambda}(t):=\lambda A_{1}(t)+(1-\lambda)A_{2}(t), ~~\lambda\in [0,1],$$
we have by \eqref{estimate for the norm C} that
\begin{align}\label{inequality of first term 1}
\nonumber\|A_{1}^{p}(u)-A_{2}^{p}(u)\|_{C}&\leq  p\int_{0}^{1}\|A_{\lambda}(u)\|_{C}^{p-1}\|A_{1}(u)-A_{2}(u)\|_{C}\,d\lambda\\
				      &\leq pL^{p-1}\|A_{1}(u)-A_{2}(u)\|_{C};
\end{align}
which implies that
\begin{align}\label{inequality of first term 2}
\int_{0}^{t}\frac{\|A_{1}^{p}(u)-A_{2}^{p}(u)\|_{C}}{\gamma_{1}^{q}(u)+b}du\leq tpb^{-1}L^{p-1}\|A_{1}-A_{2}\|_{C([0,1],C)}.
\end{align}
Similarly, considering the convex combination
$$\gamma_{\lambda}(t):=\lambda \gamma_{1}(t)+(1-\lambda)\gamma_{2}(t), ~~\lambda\in [0,1],$$
we have that
\begin{align}\label{inequality of second term 1}
\nonumber&\int_{0}^{t}\|A_{2}^{p}(u)\|_{C}\Bigg|\frac{1}{\gamma_{1}^{q}(u)+b}-\frac{1}{\gamma_{2}^{q}(u)+b}\Bigg |du\\
\nonumber & \leq pL^{p}b^{-2}\int_{0}^{t}\int_{0}^{1}|\gamma_{\lambda}(u)|^{p-1}|\gamma_{1}(u)-\gamma_{2}(u)|d\lambda du\\
				     & \leq tpL^{2p-1}b^{-2}\|\gamma_{1}-\gamma_{2}\|_{C([0,t],\mathbb{R})}.
\end{align}

Combining \eqref{inequality of first term 2} and \eqref{inequality of second term 1}, we get that
\begin{align}\label{inequality of F1}
\nonumber &\|F_{1}(A_{1},\gamma_{1})-F_{1}(A_{2},\gamma_{2})\|_{C([0,t], C)}\\
&\leq tpL^{p-1}b^{-1}(1+L^{p}b^{-1})\|(A_{1},\gamma_{1})- (A_{2},\gamma_{2})\|_{C([0,t],C\times \mathbb{R})}.
\end{align}
By a similar argument as above, we ascertain that
\begin{align}\label{inequality of F2}
\nonumber &\|F_{2}(A_{1},\gamma_{1})-F_{2}(A_{2},\gamma_{2})\|_{C([0,t], \mathbb{R})}\\
&\leq te^{2K+(\frac{3}{2}+K)s}\gamma_{0}^{-s}L^{p-1}(1+e^{\frac{3}{2}+K}L\gamma_{0}^{-1})\|(A_{1},\gamma_{1})- (A_{2},\gamma_{2})\|_{C([0,t],C\times \mathbb{R})}.
\end{align}
It now follows from \eqref{inequality of F1} and \eqref{inequality of F2} that there exists $\tilde{T}=\tilde{T}(K,L,\gamma_{0})>0$ such that
the inequality \eqref{distance of F} holds. 
The proof is complete.
\end{proof}

\section{Global Existence }
In this section, we shall establish existence and uniqueness of global positive solutions. To prove the global existence and uniqueness result; i.e., Theorem \ref{theorem1}, we assume that $(A(t),\gamma(t))_{0\leq t\leq \tilde{t}}$ is a solution of \eqref{main equation} such that for all $\omega \in E,$
$$A(\omega)\in C([0,\tilde{t}];C(\overline{D}, \mathbb{R})),\quad \gamma(\omega)\in C([0,\tilde{t}]; \mathbb{R}),$$
and then we prove an \emph{a priori} estimate for  $(A(t),\gamma(t))$ almost surely. \\

First, we need the following results.
\begin{proposition}[It\^o's Lemma]
Suppose that $f = f(t, B_t)\in C^2$, i.e., it has continuous partial derivatives up to order two.  
 Then with probability 1, for all $t>0$ and $x=B_{t},$
\begin{align}\label{Ito's Lemma}
df(t,x)=\Big(\frac{\partial f}{\partial t}+\frac{1}{2}\frac{\partial^2 f}{\partial x^2}\Big)dt+\frac{\partial f}{\partial x}dB_{t}.
\end{align}
\end{proposition}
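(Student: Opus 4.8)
The plan is to establish the integrated form of \eqref{Ito's Lemma} by marrying a second-order Taylor expansion with the quadratic variation property of Brownian motion. Since $f$ is only assumed $C^2$ (hence its derivatives need not be bounded globally), I would first localize by a stopping time so that $B$ and the derivatives $\partial_t f$, $\partial_x f$, $\partial_{xx} f$ are bounded on the region visited up to time $t$; the estimate $\tau_K$ from \eqref{brownian bound} is exactly the device for this. I would then fix $t>0$, choose a partition $0=t_0<t_1<\cdots<t_n=t$ whose mesh $\max_i(t_{i+1}-t_i)$ tends to zero, and write the total increment as the telescoping sum
\begin{align}
f(t,B_t)-f(0,B_0)=\sum_{i=0}^{n-1}\big[f(t_{i+1},B_{t_{i+1}})-f(t_i,B_{t_i})\big].
\end{align}
To each summand I would apply Taylor's theorem, expanding to first order in the time variable and to second order in the space variable; writing $\Delta t_i:=t_{i+1}-t_i$ and $\Delta B_i:=B_{t_{i+1}}-B_{t_i}$, each term takes the form
\begin{align}
\frac{\partial f}{\partial t}\,\Delta t_i+\frac{\partial f}{\partial x}\,\Delta B_i+\frac{1}{2}\frac{\partial^2 f}{\partial x^2}(\Delta B_i)^2+R_i,
\end{align}
with the derivatives evaluated at $(t_i,B_{t_i})$ and $R_i$ collecting the higher-order remainder.

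The core of the argument is to identify the limit of each of the four resulting sums. The sum $\sum_i \partial_t f\,\Delta t_i$ is an ordinary Riemann sum and converges to $\int_0^t \partial_s f(s,B_s)\,ds$ by path-continuity. The sum $\sum_i \partial_x f\,\Delta B_i$ converges in $L^2(\Omega)$ to the It\^o stochastic integral $\int_0^t \partial_x f(s,B_s)\,dB_s$, the integrand being adapted and square-integrable after localization. The decisive term is $\sum_i \tfrac12\,\partial_{xx} f\,(\Delta B_i)^2$, which converges to $\tfrac12\int_0^t \partial_{xx} f(s,B_s)\,ds$; this is precisely the point at which the Brownian contribution $(dB_t)^2=dt$ enters and distinguishes the statement from the classical chain rule.

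The main obstacle is justifying this last convergence. I would control the difference
\begin{align}
\sum_i \frac{1}{2}\frac{\partial^2 f}{\partial x^2}\big[(\Delta B_i)^2-\Delta t_i\big]
\end{align}
and show it vanishes in $L^2(\Omega)$ as the mesh shrinks. Conditioning on the filtration and using the independence and stationarity of Brownian increments, each bracket has conditional mean zero and variance $2(\Delta t_i)^2$, so the cross terms drop out and the expected square is bounded by a constant times $\sum_i(\Delta t_i)^2\le t\cdot\max_i\Delta t_i\to 0$; the continuity and (localized) boundedness of $\partial_{xx} f$ supply the uniform control. A parallel estimate, using that $R_i$ is of order $o(\Delta t_i)+o((\Delta B_i)^2)$, shows $\sum_i R_i\to 0$. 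Passing to the limit in all four sums yields $f(t,B_t)-f(0,B_0)=\int_0^t\big(\partial_s f+\tfrac12\partial_{xx} f\big)\,ds+\int_0^t \partial_x f\,dB_s$, of which \eqref{Ito's Lemma} is the differential form. Finally, to upgrade the $L^2$ limit to the asserted almost-sure statement holding simultaneously for all $t$, I would extract an almost-surely convergent subsequence and invoke continuity of both sides of the identity in $t$.
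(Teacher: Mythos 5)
The paper contains no proof of this proposition to compare against: It\^o's Lemma is stated as a classical preliminary (implicitly deferred to the standard probability references cited in Section 2, e.g.\ Durrett) and is then simply applied in Lemmas 3.1--3.3. Your proposal reconstructs the standard textbook proof, and it is essentially correct. The skeleton --- localization by a stopping time such as $\tau_K$ so that $B$ stays in a compact set on which the derivatives of $f$ are bounded and uniformly continuous, the telescoping sum over a partition, Taylor expansion to first order in $t$ and second order in $x$, identification of the three limits (ordinary Riemann sum; It\^o integral as the $L^2$-limit of left-endpoint sums of an adapted bounded integrand; and the quadratic-variation term, where your conditional-variance computation $\mathrm{Var}\big[(\Delta B_i)^2-\Delta t_i\big]=2(\Delta t_i)^2$ and the vanishing of cross terms by adaptedness are exactly right), and the final subsequence-plus-continuity argument to get an almost-sure identity valid for all $t$ simultaneously --- is the canonical argument. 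Two points of bookkeeping you would need to make explicit in a full write-up: first, the remainder $R_i$ includes the contribution from expanding in the time variable at the shifted spatial point (equivalently, a mixed term of order $\Delta t_i\,|\Delta B_i|$), and killing $\sum_i R_i$ requires the uniform continuity of the derivatives on $[0,t]\times[-K,K]$ furnished by the localization together with the fact that $\sum_i(\Delta B_i)^2$ converges to $t$ in probability; second, having proved the formula for the process stopped at $\tau_K$, you must let $K\to\infty$ and use $\tau_K\uparrow\infty$ almost surely to remove the localization. Neither is a gap in the idea, only detail to be filled in; your proof supplies what the paper leaves as a citation.
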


\begin{lemma}\label{Lemma F1}
For the function $\gamma(t)$, we have the following estimates:
\begin{align}\label{estimate H 1}
\gamma(t)\geq \Big(\frac{\eta}{\tau}\Big)^{\frac{1}{s+1}}e^{-\frac{3}{2\eta}t+\frac{1}{\sqrt{\eta}}B_t}\gamma_{0} \quad  t>0,
\end{align}
\begin{align}\label{estimate H 2}
\inf_{0\leq s\leq t}\gamma(s)\geq \Big(\frac{\eta}{\tau}\Big)^{\frac{1}{s+1}}e^{-\frac{3}{2\eta}t-\frac{1}{\sqrt{\eta}}B^*_t}\gamma_{0} \quad  t>0.
\end{align}
\end{lemma}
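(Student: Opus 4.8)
The plan is to exploit the sign structure of the $\gamma$-dynamics: along any solution the forcing term $\overline{A^r}/\gamma^{s}$ is nonnegative (because $A\ge 0$ and, by the positivity already built into the local solution of Proposition~\ref{local existence}, $\gamma>0$). Consequently $\gamma$ dominates the purely multiplicative part of its own evolution, which is a geometric Brownian motion and can be integrated explicitly via the Itô formula \eqref{Ito's Lemma}. Estimate \eqref{estimate H 1} is the resulting lower bound, and \eqref{estimate H 2} is then obtained by minimizing that bound over the time interval.

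First I would apply \eqref{Ito's Lemma} to $f(\gamma)=\log\gamma$, which is legitimate since $\gamma$ stays strictly positive on the local existence interval. Working (as in the proof of Proposition~\ref{local existence}) under the normalization $\tau=\eta=1$, the $\gamma$-line of \eqref{main equation} reads $d\gamma=\big(-\gamma+\overline{A^{r}}/\gamma^{s}\big)\,dt+\gamma\,dB_{t}$, so the multiplicative noise makes the diffusion coefficient of $\log\gamma$ constant:
\begin{equation*}
d(\log\gamma)=\frac{1}{\gamma}\,d\gamma-\frac{1}{2\gamma^{2}}(d\gamma)^{2}=\Big(-\tfrac{3}{2}+\tfrac{\overline{A^{r}}}{\gamma^{s+1}}\Big)\,dt+dB_{t},
\end{equation*}
where the quadratic-variation correction $-\tfrac12\gamma^{-2}(d\gamma)^{2}=-\tfrac12\,dt$ is exactly what upgrades the drift coefficient from $1$ to $\tfrac{3}{2}$. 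Restoring general $\tau,\eta$ turns the constant diffusion coefficient into $\tfrac{1}{\sqrt\eta}$ and the drift coefficient into $\tfrac{3}{2\eta}$, and the deterministic prefactor $(\eta/\tau)^{1/(s+1)}$ in \eqref{estimate H 1} enters through this un-normalization.

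Next, since $\overline{A^{r}}/\gamma^{s+1}\ge 0$, I would discard that nonnegative drift contribution to obtain the pathwise differential inequality $d(\log\gamma)\ge-\tfrac{3}{2\eta}\,dt+\tfrac{1}{\sqrt\eta}\,dB_{t}$; because the stochastic integrand is a constant, the stochastic integral reduces to the common term $\tfrac{1}{\sqrt\eta}B_{t}$ and this is an honest pathwise inequality between ordinary integrals. Integrating from $0$ to $t$ and exponentiating yields \eqref{estimate H 1}. The same conclusion is visible directly from the variation-of-constants representation \eqref{operator setting 2}: dropping its nonnegative integral term leaves $\gamma(t)$ bounded below by $R(t,B_{t})\gamma_{0}$, up to the normalizing constant.

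Finally, for \eqref{estimate H 2} I would minimize \eqref{estimate H 1} over $[0,t]$. For every $v\in[0,t]$ estimate \eqref{estimate H 1} gives $\gamma(v)\ge(\eta/\tau)^{1/(s+1)}e^{-\frac{3}{2\eta}v+\frac{1}{\sqrt\eta}B_{v}}\gamma_{0}$, and using $-\tfrac{3}{2\eta}v\ge-\tfrac{3}{2\eta}t$ together with $B_{v}\ge-|B_{v}|\ge-B^{*}_{t}$ from \eqref{brownian bound}, each term in the exponent is bounded below uniformly in $v$; taking $\inf_{0\le v\le t}$ produces \eqref{estimate H 2}. The main delicacy is not conceptual but in the bookkeeping: tracking the Itô correction and the $\tau,\eta$-rescaling so that the prefactor and both exponential rates come out precisely as stated, while ensuring throughout that $\gamma>0$ so that $\log\gamma$—and hence the entire argument—remains well defined.
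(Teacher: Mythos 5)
Your core idea coincides with the paper's: apply It\^o's formula to a function of $\gamma$ that converts the $\gamma$-equation into geometric-Brownian-motion dynamics plus a nonnegative forcing proportional to $\overline{A^r}$, drop that forcing, and integrate; estimate \eqref{estimate H 2} then follows in both arguments by the same replacements $B_v\geq -B^*_t$ and $-v\geq -t$. The difference is the test function: the paper applies It\^o to $\gamma^{s+1}$ (see \eqref{computation H0 1}--\eqref{results 111111}), which eliminates the singular factor $\gamma^{-s}$ so that the forcing becomes $(s+1)\overline{A^r}$, solves the resulting linear SDE by variation of constants \eqref{results 1111111}, drops the nonnegative integral, and takes $(s+1)$-th roots; you apply It\^o to $\log\gamma$, which makes the noise additive and the resulting pathwise inequality integrable by inspection. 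At $\tau=\eta=1$ the two routes give the identical bound $\gamma(t)\geq e^{-\frac{3}{2}t+B_t}\gamma_0$, and your version is arguably the more elementary of the two.

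The gap is your final ``restoring general $\tau,\eta$'' step, which is asserted rather than computed, and as stated it does not hold: from the SDE in \eqref{main equation} one gets $d(\log\gamma)=\bigl(-\frac{1}{\tau}+\frac{\overline{A^r}}{\tau\gamma^{s+1}}-\frac{\eta}{2\tau^2}\bigr)dt+\frac{\sqrt{\eta}}{\tau}dB_t$, so dropping the forcing yields $\gamma(t)\geq \gamma_0\exp\bigl(-(\frac{1}{\tau}+\frac{\eta}{2\tau^2})t+\frac{\sqrt{\eta}}{\tau}B_t\bigr)$, with prefactor $1$, drift rate $\frac{1}{\tau}+\frac{\eta}{2\tau^2}$, and noise coefficient $\frac{\sqrt{\eta}}{\tau}$; none of these equal the stated $(\eta/\tau)^{\frac{1}{s+1}}$, $\frac{3}{2\eta}$, and $\frac{1}{\sqrt{\eta}}$ except when $\tau=\eta=1$. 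In particular, the prefactor $(\eta/\tau)^{\frac{1}{s+1}}$ cannot ``enter through un-normalization'' of a logarithm: its exponent $\frac{1}{s+1}$ is produced precisely by taking the $(s+1)$-th root of the factor $\eta/\tau$ in \eqref{results 1111111}, i.e., it is an artifact of working with $\gamma^{s+1}$, and no log-based computation will generate it. So to prove the lemma with the constants as written you must either redo the calculation with $\gamma^{s+1}$ along the paper's lines (adopting its conventions for how $\tau$ and $\eta$ enter \eqref{operator setting 2}), or accept that the log route proves only the normalized case $\tau=\eta=1$. To be fair, the paper's own bookkeeping between \eqref{results 111111} and \eqref{results 1111111} is not internally consistent either, but since the statement of Lemma \ref{Lemma F1} pins down specific constants, your proof as written does not establish it in the stated generality.
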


\begin{proof}
Using It\^o's Lemma and the identity \eqref{operator setting 2}, we have that, for $x=B_{t}$,
\begin{align}\label{computation H0 1}
\nonumber\frac{\partial}{\partial t}\gamma^{s+1}(t)dt=&(s+1)\gamma^{s}(t)\frac{\partial \gamma(t)}{\partial t}dt\\
\nonumber			     =&(s+1)\gamma^{s}(t)\Bigg(-\frac{3}{2\tau}\gamma(t)dt+\frac{1}{\tau}\frac{\overline{A^r}(t)}{\gamma^{s}(t)}dt\Bigg)\\
					    =&-\frac{3}{2\tau}(s+1)\gamma^{s+1}(t)dt+\frac{1}{\tau}(s+1)\overline{A^r}(t)dt,
\end{align}
\begin{align}\label{computation H0 2}
\frac{\partial}{\partial x}\gamma^{s+1}(t)dB_t=(s+1)\gamma^{s}(t)\frac{\partial \gamma(t)}{\partial x}dB_t=\frac{\sqrt{\eta}}{\tau}(s+1)\gamma^{s+1}(t)dB_t,
\end{align}
\begin{align}\label{computation H0 3}
\frac{\partial^{2}}{\partial x^{2}}(\gamma(t))^{s+1}dt=\frac{\partial}{\partial x}\Big(\frac{\sqrt{\eta}}{\tau}(s+1)(\gamma(t))^{s+1}\Big)dt=\frac{1}{\tau}(s+1)^{2}\gamma^{s+1}(t)dt.
\end{align}
It follows from \eqref{computation H0 1} -- \eqref{computation H0 3} that
\begin{align}\label{results 111111}
\nonumber &\tau d\gamma^{s+1}(t)=\tau\Big(\frac{\partial }{\partial t}\gamma^{s+1}(t)+\frac{1}{2}\frac{\partial^2 }{\partial x^2}\gamma^{s+1}(t)\Big)dt+\tau\frac{\partial }{\partial x}\gamma^{s+1}(t)dB_{t}\\
=& \frac{1}{2}(s+1)(s-2)\gamma^{s+1}(t)dt+\sqrt{\eta}(s+1)\gamma^{s+1}(t)dB_t +(s+1)\overline{A^r}(t)dt;
\end{align}
which implies that
\begin{align}\label{results 1111111}
\nonumber \tau\gamma^{s+1}(t)= &\eta e^{-\frac{3}{2\eta}(s+1)t+\frac{1}{\sqrt{\eta}}(s+1)B_t}\gamma_{0}^{1+s}\\
\nonumber &+\eta(s+1)\int_{0}^{\frac{t}{\eta}}e^{-\frac{3}{2\eta}(s+1)(t-u)+\frac{1}{\sqrt{\eta}}(s+1)(B_t-B_u)}\overline{A^r}(u)du\\
\geq&\eta e^{-\frac{3}{2\eta}(s+1)t+\frac{1}{\sqrt{\eta}}(s+1)B_t}\gamma_{0}^{1+s};
\end{align}
from which we derive the estimates \eqref{estimate H 1} and \eqref{estimate H 2}. 
The proof is complete.
\end{proof}

\begin{lemma}\label{Lemma F11}
For every constant $\delta>0$, define the function
\begin{align}\label{definition h}
h_{\delta}(x,t):=\frac{A^{r}(x,t)}{\gamma^{s+1+\delta}(t)},\;\quad(x,t)\in D\times [0,T).
\end{align}
Then, $h_{\delta} \in L^{1}\left(D\times [0,T)\right)$ almost surely, and one has that 
\begin{align}\label{estimate h}
\int_{0}^{\tilde{t}}\int_{D}h_{\delta}(x,t)dxdt\leq \frac{\tau}{\delta\gamma_{0}^{\delta}}+\frac{\delta-3}{2\gamma_{0}^{\delta}}\tilde{t}\Big(\frac{\eta}{\tau}\Big)^{-\frac{\delta}{s+1}}e^{\frac{3\delta}{2\eta}+\frac{\delta}{\eta} K}+\sqrt{\eta}\sup_{0\leq t\leq \tilde{t}}\Bigg|\int_{0}^{t} \frac{1}{\gamma^{\delta}(s)}dB_{s}\Bigg|.
\end{align}
\end{lemma}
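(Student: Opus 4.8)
The plan is to mimic the computation in the proof of Lemma \ref{Lemma F1}, but applied to the \emph{negative} power $\gamma^{-\delta}$ instead of $\gamma^{s+1}$, so that the nonlinear forcing $\overline{A^r}/\gamma^{s}$ in the $\gamma$-equation of \eqref{main equation} is converted into precisely the quantity $h_\delta$ we wish to control. Writing $f(t,x)=\gamma^{-\delta}$ with $x=B_t$ and applying It\^o's Lemma \eqref{Ito's Lemma} together with the differential relations for $\partial_t\gamma$ and $\partial_x\gamma$ used in \eqref{computation H0 1}--\eqref{computation H0 3}, I would obtain a stochastic differential identity of the form
$$d(\gamma^{-\delta}) = \Big(c_1\,\gamma^{-\delta} - \frac{\delta}{\tau}\,\frac{\overline{A^r}}{\gamma^{s+1+\delta}}\Big)\,dt - \frac{\delta\sqrt{\eta}}{\tau}\,\gamma^{-\delta}\,dB_t,$$
where $c_1$ is an explicit constant collecting the first-order drift and the second-order It\^o correction $\tfrac12\,\partial_{xx}(\gamma^{-\delta})$. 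The point is that the middle term reproduces $\overline{A^r}(t)/\gamma^{s+1+\delta}(t)$, which up to the factor $|D|$ equals $\int_D h_\delta(x,t)\,dx$; hence integrating this identity in time directly exposes $\int_0^{\tilde t}\int_D h_\delta\,dx\,dt$.

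Next I would integrate over $[0,\tilde t]$ and solve for the forcing term, giving
$$\frac{\delta}{\tau}\int_0^{\tilde t}\frac{\overline{A^r}}{\gamma^{s+1+\delta}}\,dt = \gamma_0^{-\delta} - \gamma^{-\delta}(\tilde t) + c_1\int_0^{\tilde t}\gamma^{-\delta}\,dt - \frac{\delta\sqrt{\eta}}{\tau}\int_0^{\tilde t}\gamma^{-\delta}\,dB_t.$$
Since $\gamma(t)>0$ forces $\gamma^{-\delta}(\tilde t)\ge 0$, the terminal term carries a favorable sign and can be discarded to turn this exact identity into an upper bound; this step is essentially free.

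The two remaining terms on the right are then bounded separately. For the Lebesgue integral $\int_0^{\tilde t}\gamma^{-\delta}\,dt$ I would invoke the pathwise lower bound \eqref{estimate H 2} of Lemma \ref{Lemma F1}: on the event $E$ (equivalently on the stopped interval where $B^*_{\tilde t}<K$) this yields a pointwise bound $\gamma^{-\delta}(t)\le (\eta/\tau)^{-\delta/(s+1)}e^{\frac{3\delta}{2\eta}t+\frac{\delta}{\sqrt{\eta}}K}\gamma_0^{-\delta}$, and integrating in $t$ produces the deterministic middle term of \eqref{estimate h}. For the stochastic integral I would, after multiplying through by $\tau/\delta$, simply estimate it by $\sqrt{\eta}\,\sup_{0\le t\le\tilde t}\big|\int_0^t \gamma^{-\delta}\,dB_s\big|$, which is the last term of \eqref{estimate h}. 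Finiteness of the right-hand side (hence $h_\delta\in L^1$ almost surely) follows because on $E$ the first two terms are explicit and finite while $\gamma^{-\delta}$ is continuous, bounded, and adapted on the stopped interval, so its It\^o integral is a continuous martingale with almost surely finite supremum on $[0,\tilde t]$; letting $K\to\infty$ over the events $E$, whose complements satisfy $\mathbb{P}(E^c)<C/K^2$, upgrades the conclusion to hold almost surely.

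The main obstacle I anticipate is the correct It\^o expansion of the singular power $\gamma^{-\delta}$ — in particular tracking the second-order correction $\tfrac12\,\partial_{xx}(\gamma^{-\delta})$ that enters $c_1$ and must be combined carefully with the first-order drift to recover the precise constant multiplying $\tilde t$ in \eqref{estimate h}; the accompanying sign bookkeeping (ensuring that the discarded terminal term and the reorganized forcing term carry the correct signs) is the other delicate point. Everything after the It\^o step is a routine combination of Lemma \ref{Lemma F1} with elementary supremum estimates.
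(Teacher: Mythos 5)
Your plan reproduces the paper's argument for the main estimate essentially step for step: apply It\^o's Lemma to $\gamma^{-\delta}$ so that the forcing term $\overline{A^r}/\gamma^{s}$ turns into the quantity $\overline{A^r}/\gamma^{s+1+\delta}$ (i.e.\ $\int_D h_\delta\,dx$ up to the factor $|D|$, which you track more carefully than the paper does), integrate in time, discard the terminal term $-\gamma^{-\delta}(\tilde t)\le 0$, bound $\int_0^{\tilde t}\gamma^{-\delta}\,dt$ pointwise on $E$ via \eqref{estimate H 2} of Lemma \ref{Lemma F1}, and keep the stochastic term as $\sqrt{\eta}\,\sup_{0\le t\le \tilde t}\bigl|\int_0^t \gamma^{-\delta}\,dB_s\bigr|$; this is exactly \eqref{results 11111} and the two displayed inequalities that follow it in the paper. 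The one place where you genuinely diverge is the final step, proving that this supremum is finite almost surely (which is what makes $h_\delta\in L^1$ a.s.): the paper does this by a moment estimate --- H\"older's inequality, Doob's $L^2$ maximal inequality, and It\^o's isometry, followed by the exponential moment bound $\mathbb{E}\,e^{cB^*_t}<\infty$ --- so that the supremum has finite expectation and is therefore a.s.\ finite; you instead argue pathwise that on $E$ (equivalently, up to the stopping time $\tau_K$) the integrand $\gamma^{-\delta}$ is continuous, adapted and bounded, so the It\^o integral is a continuous (local) martingale whose supremum over the compact interval $[0,\tilde t]$ is automatically finite a.s., and then let $K\to\infty$ using $\mathbb{P}(E^c)<C/K^2$. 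Both arguments are valid. Yours is more elementary and avoids the isometry computation entirely; the paper's is stronger in that it controls the supremum in expectation, which is the kind of quantitative bound that feeds naturally into the $L^\ell(\Omega)$ estimates of Corollary \ref{cor}, whereas your pathwise argument yields only the almost-sure statement actually asserted in the lemma. One point to tighten if you write this up: identify the constant $c_1$ explicitly (under the paper's conventions it is $\frac{\delta(\delta-3)}{2\tau}$), since the coefficient $\frac{\delta-3}{2}$ appears explicitly in \eqref{estimate h} and the sign bookkeeping you flag is indeed where the paper's own computations are least careful.
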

\begin{proof}
By using a similar argument as in Lemma \ref{Lemma F1}, we have that
\begin{align}\label{computation H1 1}
\tau\frac{\partial}{\partial t}\gamma^{-\delta}(t)dt
					    =-\frac{3\delta}{2}\gamma^{-\delta}(t)dt-\delta\dfrac{\overline{A^r}(t)}{\gamma^{s+1+\delta}(t)}dt,
\end{align}
\begin{align}\label{computation H1 2}
\tau\frac{\partial}{\partial x}\gamma^{-\delta}(t)dB_t=-\delta\sqrt{\eta}\gamma^{-\delta}(t)dB_t,
\end{align}
\begin{align}\label{computation H1 3}
\tau\frac{\partial^{2}}{\partial x^{2}}\gamma^{-\delta}(t)dt=\delta^{2}\gamma^{-\delta}(t)dt.
\end{align}
It follows from \eqref{computation H1 1} -- \eqref{computation H1 3} that
\begin{align}\label{results 11111}
\tau d\gamma^{-\delta}(t)=\frac{1}{2}\delta(\delta-3)\gamma^{-\delta}(t)dt-\delta\sqrt{\eta}\gamma^{-\delta}(t)dB_t -\delta\dfrac{\overline{A^r}(t)}{\gamma^{s+1+\delta}(t)}dt.
\end{align}
This implies by \eqref{estimate H 2} that
\begin{align*}
\int_{0}^{\tilde{t}}\dfrac{\overline{A^r}(t)}{\gamma^{s+1+\delta}(t)}dt\leq & \frac{\tau}{\delta}\gamma_{0}^{-\delta}+\frac{1}{2}(\delta-3)\int_{0}^{\tilde{t}}\gamma^{-\delta}(t)dt+\sqrt{\eta}\sup_{0\leq t\leq \tilde{t}}\Bigg|\int_{0}^{t} \frac{1}{\gamma^{\delta}(s)}dB_{s}\Bigg|\\
\leq &\frac{\tau}{\delta\gamma_{0}^{\delta}}+\frac{\delta-3}{2}\tilde{t}\Big(\frac{\eta}{\tau}\Big)^{-\frac{\delta}{s+1}}e^{\frac{3\delta}{2}+\frac{\delta}{\eta} K}\gamma_{0}^{-\delta}+\sqrt{\eta}\sup_{0\leq t\leq \tilde{t}}\Bigg|\int_{0}^{t} \frac{1}{\gamma^{\delta}(s)}dB_{s}\Bigg|.
\end{align*}
Now, it suffices to show that
$$\sup_{0\leq t\leq \tilde{t}}\Bigg|\int_{0}^{t} \frac{1}{\gamma^{\delta}(s)}dB_{s}\Bigg|<\infty\quad \mbox{almost surely}.$$
Indeed, using  H\"older's inequality, martingale inequality and It\^o's  isometry, we have  by \eqref{estimate H 2} that
\begin{align*}
&\mathbb{E}\sup_{0\leq t\leq \tilde{t}}\Bigg|\int_{0}^{t} \frac{1}{\gamma^{\delta}(s)}dB_{s}\Bigg|\leq\Bigg(\mathbb{E}\sup_{0\leq t\leq \tilde{t}}\Bigg|\int_{0}^{t} \frac{1}{\gamma^{\delta}(s)}dB_{s}\Bigg|^{2}\Bigg)^{1/2}\\
\leq&\sqrt{2}\Bigg(\mathbb{E}\Bigg|\int_{0}^{t} \frac{1}{\gamma^{\delta}(s)}dB_{s}\Bigg|^{2}\Bigg)^{1/2}
\leq\sqrt{2}\Bigg(\mathbb{E}\int_{0}^{t} \frac{1}{\gamma^{2\delta}(s)}dB_{s}\Bigg)^{1/2}\\
\leq&\sqrt{2}\gamma_{0}^{-\delta}\Big(\frac{\eta}{\tau}\Big)^{-\frac{\delta}{s+1}}\Bigg(\int_{0}^{\tilde{t}}\mathbb{E}e^{\frac{3}{\eta}t-\frac{2}{\sqrt{\eta}}B^*_t}dt\Bigg)^{1/2}<\infty.
\end{align*}
The proof is complete.
\end{proof}

\begin{lemma}\label{Lemma F2}
For any two constants $\alpha> 1,\beta\geq 0$, define the function
\[h_{\alpha,\beta}(t,B_{t}):=\int_{D}\dfrac{A^{\alpha}(x,t)}{\gamma^{\beta}(t,B_{t})}dx, \quad 0\leq t < T.\]
$$\dfrac{p-1}{r}<\min\left\lbrace \frac{2}{N+2}, \frac{q}{s+1}\right\rbrace,$$
it follows that for all $\omega \in E$ defined in \eqref{set E} up to negligible set,
\begin{equation}\label{diff. ineq}
dh{_{\alpha,\beta}}\leq \Big(\dfrac{1}{2\tau}(3\beta+\beta^2)-\alpha\Big)h{_{\alpha,\beta}}dt-\Big(\dfrac{\sqrt{\eta}\beta}{\tau}+\beta\Big)h{_{\alpha,\beta}}dB_{t}+v(t)h_{\alpha,\beta},
\end{equation}
where $v(t)$ is an integrable function on $(0,T)$, almost surely.
\end{lemma}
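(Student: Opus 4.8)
The plan is to exploit the product structure
$h_{\alpha,\beta}(t,B_t)=\gamma^{-\beta}(t,B_t)\,M(t)$, where $M(t):=\int_D A^{\alpha}(x,t)\,dx$, and to treat the two factors separately: the scalar process $\gamma^{-\beta}$ carries all of the stochasticity and is handled by It\^o's Lemma \eqref{Ito's Lemma} exactly as in Lemmas \ref{Lemma F1} and \ref{Lemma F11}, while $M$ is a purely time-dependent, absolutely continuous functional governed by the parabolic equation for $A$ in \eqref{main equation}. Once both differentials are in hand, I would recombine them by the It\^o product rule; because $M(\cdot)$ carries no martingale part, the quadratic covariation $\langle \gamma^{-\beta},M\rangle$ vanishes and the product rule collapses to $dh_{\alpha,\beta}=M\,d(\gamma^{-\beta})+\gamma^{-\beta}\,dM$, which is what makes the decomposition clean.

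For the first factor I would repeat the algebra that led to \eqref{results 11111}: applying It\^o's Lemma to $\gamma^{-\beta}$ reproduces a drift $\tfrac{1}{2\tau}(3\beta+\beta^2)\gamma^{-\beta}$, a martingale coefficient $-\bigl(\tfrac{\sqrt\eta\,\beta}{\tau}+\beta\bigr)\gamma^{-\beta}$, and an additional \emph{nonpositive} contribution of the form $-\tfrac{\beta}{\tau}\,\overline{A^r}\,\gamma^{-(s+\beta+1)}$. Multiplying through by $M\geq 0$ and discarding this last nonpositive term already yields the two explicit terms on the right-hand side of \eqref{diff. ineq}.

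For the second factor I would differentiate under the integral sign and insert the $A$-equation:
\[
\frac{dM}{dt}=\alpha\int_D A^{\alpha-1}\Bigl(\epsilon^2\Delta A - A + \tfrac{A^p}{\gamma^q+b}\Bigr)\,dx .
\]
Integrating the Laplacian term by parts produces the Dirichlet-energy term $-\alpha(\alpha-1)\epsilon^2\int_D A^{\alpha-2}|\nabla A|^2\,dx$ together with a boundary integral, and the Robin--Neumann condition $\epsilon\,\partial A/\partial\nu + aA=0$ turns the latter into $-a\alpha\epsilon\int_{\partial D}A^{\alpha}\,dS\leq 0$ (using $a\geq 0$, $A\geq 0$). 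Since $\alpha>1$ both terms are nonpositive and may be dropped, leaving $\tfrac{dM}{dt}\leq -\alpha M+\alpha\int_D \tfrac{A^{\alpha+p-1}}{\gamma^q+b}\,dx$. Inserting this into the product rule reproduces the $-\alpha\,h_{\alpha,\beta}\,dt$ term and isolates the residual reaction contribution $\alpha\,\gamma^{-\beta}\int_D \tfrac{A^{\alpha+p-1}}{\gamma^q+b}\,dx$, which must be matched with $v(t)\,h_{\alpha,\beta}$.

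The heart of the argument, and the step I expect to be the main obstacle, is showing that this reaction contribution can be written as $v(t)\,h_{\alpha,\beta}$ with $v\in L^1(0,T)$ almost surely. Setting $v(t):=\alpha\bigl(\int_D \tfrac{A^{\alpha+p-1}}{\gamma^q+b}\,dx\bigr)\big/\bigl(\int_D A^{\alpha}\,dx\bigr)$, I would bound $1/(\gamma^q+b)\leq \gamma^{-q}$, control the negative power of $\gamma$ through the pathwise lower bound \eqref{estimate H 2} of Lemma \ref{Lemma F1}, and estimate the supercritical spatial integral $\int_D A^{\alpha+p-1}\,dx$ by Hölder and Sobolev interpolation. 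The two structural hypotheses $\tfrac{p-1}{r}<\tfrac{2}{N+2}$ and $\tfrac{p-1}{r}<\tfrac{q}{s+1}$ are precisely what render the resulting exponents admissible, and the time-integrability $\int_0^{T}v(t)\,dt<\infty$ then follows from the a priori $L^{1}\bigl(D\times(0,T)\bigr)$ estimate \eqref{estimate h} of Lemma \ref{Lemma F11}. Verifying that these two conditions close all the exponent inequalities, while keeping every pathwise constant finite on the event $E$ (modulo the negligible set $E^c$ of \eqref{set E}), is the delicate bookkeeping that carries the proof.
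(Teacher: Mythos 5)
There is a genuine gap, and it is exactly at the step you yourself flag as the ``main obstacle'': you discard the Dirichlet-energy term $-\alpha(\alpha-1)\epsilon^{2}\int_{D}A^{\alpha-2}|\nabla A|^{2}\,dx$ on the grounds that it is nonpositive, but this term is indispensable and the paper keeps it. The reaction contribution you must dominate is $\alpha\gamma^{-\beta}\int_{D}\frac{A^{\alpha+p-1}}{\gamma^{q}+b}\,dx$, and after writing $\frac{A^{\alpha+p-1}}{\gamma^{q}}=(h_{\delta})^{\kappa}z^{2}$ with $\kappa=\frac{p-1}{r}=\frac{q}{s+1+\delta}$ and $z=A^{\alpha/2}$, H\"older gives the bound $\|h_{\delta}\|_{L^{1}(D)}^{\kappa}\,\|z\|_{L^{2/(1-\kappa)}(D)}^{2}$. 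The factor $\|z\|_{L^{2/(1-\kappa)}(D)}^{2}$ is a \emph{strictly stronger} Lebesgue norm than $\|z\|_{L^{2}(D)}^{2}=\int_{D}A^{\alpha}\,dx$, and on a bounded domain the inequality $\|z\|_{L^{2/(1-\kappa)}}\lesssim\|z\|_{L^{2}}$ is simply false; so your proposed $v(t)=\alpha\bigl(\int_{D}\frac{A^{\alpha+p-1}}{\gamma^{q}+b}\,dx\bigr)\big/\bigl(\int_{D}A^{\alpha}\,dx\bigr)$ cannot be controlled by ``H\"older and Sobolev interpolation'' alone. The only mechanism that pays for this gain in integrability is the Gagliardo--Nirenberg inequality
\begin{equation*}
\|z\|^{2}_{L^{2/(1-\kappa)}(D)}\leq 4C\Bigl[\|\nabla z\|^{2\theta}_{L^{2}(D)}\|z\|^{2(1-\theta)}_{L^{2}(D)}+\|z\|^{2}_{L^{2}(D)}\Bigr],\qquad \theta=\tfrac{N\kappa}{2},
\end{equation*}
whose gradient factor must then be absorbed, via Young's inequality with a sufficiently small parameter $\lambda$, into the \emph{retained} negative term $-\alpha^{-1}\epsilon^{2}(\alpha-1)\|\nabla z\|^{2}_{L^{2}(D)}$. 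Once you have thrown that term away, there is nothing to absorb $\|\nabla z\|^{2\theta}_{L^{2}}$ into, and the argument cannot close.

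This absorption is also where the hypothesis $\frac{p-1}{r}<\frac{2}{N+2}$ actually does its work: after Young's inequality the surviving coefficient is $\|h_{\delta}\|^{\kappa/(1-\theta)}_{L^{1}(D)}+\|h_{\delta}\|^{\kappa}_{L^{1}(D)}$, and the exponent condition $\frac{\kappa}{1-\theta}=\frac{2\kappa}{2-N\kappa}\leq 1$ is equivalent to $\kappa\leq\frac{2}{N+2}$; only then does the $L^{1}\bigl(D\times(0,T)\bigr)$ bound on $h_{\delta}$ from Lemma \ref{Lemma F11} yield $v\in L^{1}(0,T)$ almost surely. The rest of your outline is sound and matches the paper in substance: your product-rule decomposition $dh_{\alpha,\beta}=M\,d(\gamma^{-\beta})+\gamma^{-\beta}\,dM$ is equivalent to the paper's direct application of It\^o's Lemma to $h_{\alpha,\beta}(t,B_{t})$, the sign analysis of the Robin boundary term and the discarding of the nonpositive $\overline{A^{r}}$ term are exactly as in the paper, and invoking Lemma \ref{Lemma F11} for the time-integrability of $v$ is the right final ingredient. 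To repair the proof, keep the Dirichlet-energy term in your expression for $dM/dt$, apply Gagliardo--Nirenberg and Young as above, and only then conclude.
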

\begin{proof}
Let $\alpha> 1$ and $\beta\geq 0$. Using It\^o's Lemma, we have that for $s=B_{t},$
\begin{align*}
dh{_{\alpha,\beta}(t,s)}=\Big(\dfrac{\partial h{_{\alpha,\beta}}}{\partial t}+\dfrac{1}{2}\dfrac{\partial^2 h{_{\alpha,\beta}}}{\partial s^2}\Big)dt+\dfrac{\partial h{_{\alpha,\beta}}}{\partial s}dB_{t}.
\end{align*}
By using similar arguments as in \eqref{computation H0 2} -- \eqref{computation H0 3} and \eqref{results 11111}, we get
\begin{align}\label{dh estimate 1}
&\nonumber\dfrac{\partial h{_{\alpha,\beta}} }{\partial t}dt
\nonumber=\int_{D}\left[\alpha\dfrac{A^{\alpha -1}}{\gamma^{\beta}}A_{t}dt-\beta\dfrac{A^{\alpha}}{\gamma^{\beta+1}}\frac{\partial\gamma}{\partial t}dt\right]dx\\
\nonumber=&\int_{D}\left[\alpha\dfrac{A^{\alpha -1}}{\gamma^{\beta}}\left(\epsilon^{2}\Delta A-A+\dfrac{A^{p}}{\gamma^{q}+b}\right)dt-\beta\dfrac{A^{\alpha}}{\gamma^{\beta+1}}\left(-\frac{3}{2\tau}\gamma dt+\frac{1}{\tau}\frac{\overline{A^r}(t)}{\gamma^{s}}dt\right)\right]dx\\
\nonumber=&\left(\dfrac{3\beta}{2\tau}-\alpha \right)h_{\alpha,\beta}dt+\alpha\epsilon^{2}\int_{D}\dfrac{A^{\alpha -1}}{\gamma^{\beta}}\Delta A dx dt+\alpha\int_{D}\dfrac{A^{\alpha +p-1}}{\gamma^{\beta}\left(\gamma^q+b \right)}dxdt\\
&\hspace{0.25truein} -\dfrac{\beta}{\tau}\int_{D}\dfrac{A^{\alpha}\overline{A^r}}{\gamma^{\beta +s+1}}dxdt,
\end{align}
\begin{align}\label{dh estimate 2}
\dfrac{\partial^2 h{_{\alpha,\beta}}}{\partial s^2}dt=\frac{\beta^2}{\tau}\int_{D}\dfrac{A^{\alpha }}{\gamma^{\beta}}dxdt=\frac{\beta^2}{\tau}h{_{\alpha,\beta}},
\end{align}
and
\begin{align}\label{dh estimate 3}
\dfrac{\partial h{_{\alpha,\beta}}}{\partial s}dB_t=-\frac{\sqrt{\eta}\beta}{\tau}\int_{D}\dfrac{A^{\alpha }}{\gamma^{\beta}}dxdB_t=-\frac{\sqrt{\eta}\beta}{\tau} h{_{\alpha,\beta}}dB_t.
\end{align}

Therefore, \eqref{dh estimate 1} -- \eqref{dh estimate 3} imply that
\begin{align*}
dh{_{\alpha,\beta}(t,s)}=&\Big(\dfrac{1}{2\tau}(3\beta+\beta^2)-\alpha\Big)h{_{\alpha,\beta}(t,s)}dt-\frac{\sqrt{\eta}\beta}{\tau} h{_{\alpha,\beta}(t,s)} dB_{t}\\
&+\Bigg(\alpha\epsilon^{2}\int_{D}\dfrac{A^{\alpha -1}}{\gamma^{\beta}}\Delta A dx +\alpha\int_{D}\dfrac{A^{\alpha +p-1}}{\gamma^{\beta}\left(\gamma^q+b \right)}dx-\dfrac{\beta}{\tau}\int_{D}\dfrac{A^{\alpha}\overline{A^r}}{\gamma^{\beta +s+1}}dx \Bigg)dt .
\end{align*}
Since
\begin{align*}
\alpha\epsilon^{2}\int_{D}\dfrac{A^{\alpha -1}}{\gamma^{\beta}}\Delta A dx &=\alpha\epsilon^{2}\int_{\partial D}\dfrac{A^{\alpha -1}}{\gamma^{\beta}}\nabla A\cdot\nu dS-\alpha\epsilon^{2}(\alpha -1)\int_{D}\dfrac{A^{\alpha -2}}{\gamma^{\beta}}\lvert\nabla A\rvert^{2}dx\\
&=\alpha\epsilon^{2}\int_{\partial D}\dfrac{A^{\alpha -1}}{\gamma^{\beta}}\left(\dfrac{-aA}{\epsilon}\right)dS-\alpha\epsilon^{2}(\alpha -1)\int_{D}\dfrac{A^{\alpha -2}}{\gamma^{\beta}}\lvert\nabla A\rvert^{2}dx\\
&=-\alpha\epsilon a\int_{\partial D}\dfrac{A^{\alpha}}{\gamma^{\beta}}dS-\alpha\epsilon^{2}(\alpha -1)\int_{D}\dfrac{A^{\alpha -2}}{\gamma^{\beta}}\lvert\nabla A\rvert^{2}dx\\
&\leq -\alpha\epsilon^{2}(\alpha -1)\int_{D}\dfrac{A^{\alpha -2}}{\gamma^{\beta}}\lvert\nabla A\rvert^{2}dx,
\end{align*}
we obtain the following inequality,
\begin{align}\label{inequality of h alpha beta 11}
 dh{_{\alpha,\beta}(t,s)}\leq&\Big(\dfrac{1}{2\tau}(3\beta+\beta^2)-\alpha\Big)h{_{\alpha,\beta}(t,s)}dt-\frac{\sqrt{\eta}\beta}{\tau} h{_{\alpha,\beta}(t,s)} dB_{t}+E_{1}+E_{2},
\end{align}
where
\begin{align}\label{inequality of h alpha beta 1111}
E_{1} =-\alpha\epsilon^{2}(\alpha -1)\int_{D}\dfrac{A^{\alpha -2}}{\gamma^{\beta}}\lvert\nabla A\rvert^{2}\,dxdt,
\end{align}
and
\begin{align}\label{inequality of h alpha beta 1111}
E_{2}=\alpha\int_{D}\dfrac{A^{\alpha +p-1}}{\gamma^{\beta}\left(\gamma^q+b \right)}\,dxdt.
\end{align}

Now, we concentrate on estimates of $E_{1}$ and $E_{2}$. To do so, let us define the number $0<\kappa<1$ by
\[\kappa=:\dfrac{p-1}{r}=\dfrac{q}{s+1+\delta}~\mbox{for some $\delta>0$}~~\mbox{and}~~ (p-1)<\kappa r,~ q=\kappa (s+1+\delta).\]
Then we obtain
\begin{align}\label{formula of A 11}
\dfrac{A^{\alpha+p-1}}{\gamma^{q}}
&={\left(\dfrac{A^{r}}{\gamma^{s+1+\delta}}\right)^{\kappa}}A^{\alpha}=(h_{\delta})^{\kappa}z^{2},
\end{align}
where $h_{\delta}=\dfrac{A^r}{\gamma^{s+1+\delta}}$ is defined in the statement of Lemma \ref{Lemma F11} and
\begin{align}\label{definition of z and h alpha}
z:=A^{\alpha/2}.
\end{align}
Notice that  \eqref{definition of z and h alpha} implies that
\begin{align}\label{estimate of nabla A}
|\nabla z|^{2}=\frac{\alpha^2}{4}A^{\alpha-2}|\nabla A|^{2}.
\end{align}
By H\"older's inequality, it follows from \eqref{formula of A 11} -- \eqref{definition of z and h alpha} that
\begin{align}\label{inequality of A 11}
\int_{D}\dfrac{A^{\alpha+p-1}}{\gamma^{q}}dx=\int_{D}(h_{\delta})^{\kappa}z^{2}\leq \|h_{\delta}\|^{\kappa}_{L^{1}(D)}\|z\|^{2}_{L^{\frac{2}{1-\kappa}}(D)}.
\end{align}

Since $0<\kappa \leq \frac{2}{N+2}<\frac{2}{N}$, it follows from  Gagliardo-Nirenberg inequality (see e.g. \cite{Pazy}) that there is a constant $C=C(D,N,\kappa)>0$ such that for $\theta:=\frac{N\kappa}{2}\in (0,1),$
\begin{align}\label{inequality of z 11}
\nonumber\|z\|^{2}_{L^{\frac{2}{1-\kappa}}(D)} &\leq C\Big[ \|\nabla z\|^{\theta}_{L^{2}(D)}\|z\|^{1-\theta}_{L^{2}(D)}+\|z\|_{L^{2}(D)}\Big]^{2}\\
&\leq 4C\Big[ \|\nabla z\|^{2\theta}_{L^{2}(D)}\|z\|^{2(1-\theta)}_{L^{2}(D)}+\|z\|^{2}_{L^{2}(D)}\Big].
\end{align}

It follows from \eqref{inequality of A 11} and \eqref{inequality of z 11} that

\begin{align}\label{inequality of A 1111}
\int_{D}\dfrac{A^{\alpha+p-1}}{\gamma^{q}}dx\leq 4C\|h_{\delta}\|^{\kappa}_{L^{1}(D)}\Big[ \|\nabla z\|^{2\theta}_{L^{2}(D)}\|z\|^{2(1-\theta)}_{L^{2}(D)}+\|z\|^{2}_{L^{2}(D)}\Big].
\end{align}

Since by Young's inequality one has that, for $\lambda>0$,
\begin{align}\label{inequality of A 111111}
\|h_{\delta}\|^{\kappa}_{L^{1}(D)} \|\nabla z\|^{2\theta}_{L^{2}(D)}\|z\|^{2(1-\theta)}_{L^{2}(D)}\leq \theta \lambda^{1/\theta} \|\nabla z\|^{2}_{L^{2}(D)} +\frac{1-\theta}{\lambda^{\frac{1}{1-\theta}}}\|h_{\delta}\|^{\frac{\kappa}{1-\theta}}_{L^{1}(D)} \|z\|^{2}_{L^{2}(D)},
\end{align}
then by choosing $\lambda >0$ sufficiently small such that
$$4\alpha \theta C\lambda^{1/\theta} < \frac{\epsilon^{2}(\alpha-1)}{\alpha},$$
one has that, by  using \eqref{estimate of nabla A} and  \eqref{inequality of A 1111} -- \eqref{inequality of A 111111},
\begin{align}\label{inequality of E1 and E2}
\nonumber E_{1}+E_{2} & \leq \gamma^{-\beta}\Big(-\alpha^{-1}\epsilon^{2}(\alpha -1)\|\nabla z\|^{2}_{L^{2}(D)}+4\alpha C\nonumber\|h_{\delta}\|^{\kappa}_{L^{1}(D)} \|\nabla z\|^{2\theta}_{L^{2}(D)}\|z\|^{2(1-\theta)}_{L^{2}(D)}\\
\nonumber &\quad +4\alpha C\|h_{\delta}\|^{\kappa}_{L^{1}(D)}\|z\|^{2}_{L^{2}(D)}\Big)dt\\
\nonumber &\leq \gamma^{-\beta}\Big(-\alpha^{-1}\epsilon^{2}(\alpha -1)\|\nabla z\|^{2}_{L^{2}(D)}+4\alpha \theta C\lambda^{1/\theta} \|\nabla z\|^{2}_{L^{2}(D)}\\
\nonumber &\quad +\Big[\frac{1-\theta}{\lambda^{\frac{1}{1-\theta}}}\|h_{\delta}\|^{\frac{\kappa}{1-\theta}}_{L^{1}(D)} +4\alpha C\|h_{\delta}\|^{\kappa}_{L^{1}(D)}\Big]\|z\|^{2}_{L^{2}(D)}\Big)dt\\
&\leq C_{1}\Big[\|h_{\delta}\|^{\frac{\kappa}{1-\theta}}_{L^{1}(D)} +\|h_{\delta}\|^{\kappa}_{L^{1}(D)}\Big]h{_{\alpha,\beta}}dt,~~C_{1}>0.
\end{align}
Therefore, by using \eqref{inequality of h alpha beta 11},
\begin{align}\label{inequality of nebra z}
dh{_{\alpha,\beta}}\leq \Big(\dfrac{1}{2\tau}(3\beta+\beta^2)-\alpha\Big)h{_{\alpha,\beta}(t,s)}dt-\frac{\sqrt{\kappa}\beta}{\tau} h{_{\alpha,\beta}(t,s)} dB_{t}+C_{1}v(t)h_{\alpha,\beta}dt,
\end{align}
where
$$v(t)=\|h_{\delta}(t)\|^{\frac{\kappa}{1-\theta}}_{L^{1}(D)} +\|h_{\delta}(t)\|^{\kappa}_{L^{1}(D)},$$
Since
$\frac{\kappa}{1-\theta}\leq 1$, it follows from Lemma \ref{Lemma F11} that $v(t)$ is integrable on $(0,T)$ almost surely. The proof is complete.

\end{proof}

\begin{lemma}\label{Lemma F4}
Under the conditions in Lemma \ref{Lemma F2} and the set $E$ defined in \eqref{set E} up to negligible set, there exists a constant $C(T):=C_{\alpha,\beta}(T)\leq\infty$ such that for all $t\in[0,T)$ and  $\omega \in E \subset \Omega$,
\begin{equation}\label{C(T)}
h_{\alpha,\beta}(t,\omega)\leq C(T).
\end{equation}
\end{lemma}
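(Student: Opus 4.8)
The plan is to read the differential inequality \eqref{diff. ineq} of Lemma \ref{Lemma F2} as a linear (in $h_{\alpha,\beta}$) stochastic differential inequality and to strip off both the stochastic integral and the leading drift by an explicit integrating factor, thereby reducing the estimate to a purely pathwise Gronwall argument. Writing $a:=\frac{1}{2\tau}(3\beta+\beta^2)-\alpha$ for the drift constant and letting $c$ denote the constant coefficient of the $dB_t$ term in \eqref{diff. ineq}, the decisive structural feature inherited from the proof of Lemma \ref{Lemma F2} is that the diffusion part is an \emph{exact} multiple of $h_{\alpha,\beta}$, namely $-c\,h_{\alpha,\beta}\,dB_t$, whereas only the drift carries the inequality $E_1+E_2\leq C_1 v(t)\,h_{\alpha,\beta}\,dt$. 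This asymmetry is exactly what makes the method work.

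First I would introduce the strictly positive process
$$\Psi(t):=\exp\Big(\big(\tfrac12 c^2-a\big)t+c\,B_t\Big),\qquad \Psi(0)=1,$$
and apply It\^o's product rule to $W(t):=h_{\alpha,\beta}(t)\,\Psi(t)$. The constant $c$ in the exponent is chosen so as to cancel the $dB_t$ contribution of $h_{\alpha,\beta}$, while the deterministic rate $\tfrac12 c^2-a$ is designed to absorb the leading drift $a\,h_{\alpha,\beta}$, the It\^o correction, and the quadratic covariation term $d[h_{\alpha,\beta},\Psi]=-c^2\,W\,dt$ arising from the two $dB_t$ parts. After these cancellations the martingale part of $dW$ vanishes identically, leaving the pathwise identity $dW=\Psi\,(E_1+E_2)$, whence the drift bound of Lemma \ref{Lemma F2} gives
$$dW(t)\leq C_1 v(t)\,W(t)\,dt.$$

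At this stage no stochastic integral survives, so for each fixed $\omega$ this is an ordinary integral inequality in $t$, and deterministic Gronwall yields $W(t)\leq W(0)\exp\big(C_1\int_0^t v(s)\,ds\big)$. Recalling $W=h_{\alpha,\beta}\Psi$ with $\Psi(0)=1$ and dividing through by $\Psi(t)>0$, I obtain
$$h_{\alpha,\beta}(t)\leq h_{\alpha,\beta}(0)\,\exp\Big(\big(a-\tfrac12 c^2\big)t-c\,B_t\Big)\exp\Big(C_1\int_0^t v(s)\,ds\Big).$$
It then remains to bound the right-hand side uniformly on $[0,T)$ for $\omega\in E$. The initial value $h_{\alpha,\beta}(0)=\int_D A_0^\alpha\,\gamma_0^{-\beta}\,dx$ is finite because $A_0\in W^{2,l}(D)\hookrightarrow C(\overline D)$ and $\gamma_0>0$; the first exponential is controlled since $\big(a-\tfrac12 c^2\big)t\leq \big|a-\tfrac12 c^2\big|\,T$ and, on $E$, $|c\,B_t|\leq c\,B^*_t< cK$ by \eqref{set E}; and $\int_0^t v(s)\,ds\leq\int_0^T v(s)\,ds<\infty$ almost surely by the integrability of $v$ established in Lemma \ref{Lemma F11}. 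Collecting these bounds produces a finite constant $C(T)=C_{\alpha,\beta}(T)$, depending only on $\alpha,\beta,T,K,\gamma_0$ and $\|A_0\|$, with $h_{\alpha,\beta}(t,\omega)\leq C(T)$.

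The one genuinely delicate point I expect is the justification that the integrating factor annihilates the stochastic integral \emph{exactly}: one must verify, through It\^o's product rule and in particular through the covariation term $d[h_{\alpha,\beta},\Psi]$, that the $dB_t$ coefficient of $W$ is identically zero, so that the passage from the stochastic differential inequality to a pathwise Gronwall estimate is rigorous rather than merely formal. Once that cancellation is confirmed, everything afterwards is a deterministic, $\omega$-by-$\omega$ estimate relying only on the bounds already available on $E$.
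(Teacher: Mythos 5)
Your proposal is correct and follows essentially the same route as the paper: both multiply $h_{\alpha,\beta}$ by an exponential integrating factor built from the drift constant and $c\,B_t$ so that the $dB_t$ terms cancel exactly, then conclude with a pathwise integration, using the almost-sure integrability of $v$ (Lemma \ref{Lemma F11}) and the bound $B^*_t\leq K$ on $E$ to obtain the finite constant $C(T)$. The only cosmetic difference is that the paper folds $C_1\int_0^t v(s)\,ds$ directly into the exponent of its integrating factor and asserts $d[\,\cdot\,]\leq 0$ in \eqref{final estimate}, whereas you keep $v$ out of the factor and run a deterministic Gronwall step afterwards --- your version is in fact the more carefully justified one, since you track the It\^o correction $\tfrac12 c^2 t$ and the covariation $d[h_{\alpha,\beta},\Psi]$ explicitly (the paper's exponent omits this correction and carries sign slips on the $\int v$ term).
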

\begin{proof}
Using Ito's Lemma and inequality \eqref{diff. ineq} in Lemma \ref{Lemma F2}, it follows that for all $t\in[0,T)$ and  $\omega \in E \subset \Omega$,

\begin{align}\label{final estimate}
d\left[ \exp \Bigg(-\Big(\dfrac{1}{2\tau}(3\beta+\beta^2)-\alpha\Big)t+C_{1}\int_{0}^{t}v(s)ds+\dfrac{\sqrt{\eta}\beta}{\tau}B_{t}(\omega)\Bigg)h_{\alpha,\beta}(t,\omega)\right] &\leq 0.
\end{align}

Integrating from 0 to $t$, we get from \eqref{final estimate} that
\begin{align*}
h_{\alpha,\beta}(t,\omega) &\leq C_{2} \exp \Bigg(-C_{1}\int_{0}^{T}v(s)ds\Bigg)h_{\alpha,\beta}(0,\omega),
\end{align*}
where
$$C_{2}=\exp \Bigg(\Big(\dfrac{1}{2\tau}(3\beta+\beta^2)-\alpha\Big)T \Bigg)\exp \Bigg(\dfrac{\sqrt{\eta}\beta}{\tau}\sup_{0\leq t\leq T}|B_{t}| \Bigg).$$
The proof is complete.\\
\end{proof}

From Lemma \ref{Lemma F1} and Lemma \ref{Lemma F4}, we deduce the Corollary below.
\begin{cor}\label{cor}
Let $\ell\geq 1$ and all other assumptions in Theorem \ref{theorem1}, Lemma \ref{Lemma F2} and Lemma \ref{Lemma F4} hold true. Define
\[g_{1}(A,\gamma)=\dfrac{A^{p}}{\gamma^{q}+b},\]
\[g_{2}(A,\gamma)=\dfrac{A^{r}}{\gamma^{s}},\]
then there exist positive constant $C_{\ell}(T)$, such that
\[\left\Vert g_{j}(A,\gamma)\right\Vert_{L^{\ell}(\Omega)}\leq C_{\ell}(T) \qquad j=1,2 \]
for all $0\leq t<T$.
\begin{proof}
The proof to this Corollary follows from  Lemma \ref{Lemma F4}.
\end{proof}
\end{cor}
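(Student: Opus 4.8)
Interpreting the norm as the spatial $L^\ell(D)$ norm (so that $\Omega$ denotes the domain $D$, as in the definitions of $\gamma$ and $\overline{A^r}$ at the start of the paper), the plan is to identify each $\ell$-th power $\|g_j(\cdot,t)\|_{L^\ell(D)}^\ell$ with one of the weighted spatial integrals $h_{\alpha,\beta}(t,\omega)=\int_D A^\alpha/\gamma^\beta\,dx$ that Lemma \ref{Lemma F4} already controls, and then simply read off the bound. The key observation is that the hypotheses of Lemmas \ref{Lemma F2} and \ref{Lemma F4} constrain only the structural exponents $p,q,r,s$ (through $\frac{p-1}{r}<\min\{\frac{2}{N+2},\frac{q}{s+1}\}$) and are satisfied for \emph{every} admissible pair $(\alpha,\beta)$ with $\alpha>1$, $\beta\ge0$; hence I am free to insert the exponents produced by $g_1$ and $g_2$.

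For $g_2$ the identification is exact: raising to the power $\ell$ and integrating over $D$ gives
\[
\|g_2(\cdot,t)\|_{L^\ell(D)}^\ell=\int_D\frac{A^{r\ell}(x,t)}{\gamma^{s\ell}(t)}\,dx=h_{r\ell,\,s\ell}(t,\omega),
\]
so taking $\alpha=r\ell$, $\beta=s\ell$ and applying Lemma \ref{Lemma F4} yields $\|g_2(\cdot,t)\|_{L^\ell(D)}\le C_{r\ell,s\ell}(T)^{1/\ell}$ for all $t\in[0,T)$ and $\omega\in E$. For $g_1$ the only additional step is to discard the constant $b$: since $\gamma(t)>0$ by Lemma \ref{Lemma F1}, one has $\gamma^q+b\ge\gamma^q$, whence
\[
\|g_1(\cdot,t)\|_{L^\ell(D)}^\ell=\int_D\frac{A^{p\ell}(x,t)}{(\gamma^q(t)+b)^\ell}\,dx\le\int_D\frac{A^{p\ell}(x,t)}{\gamma^{q\ell}(t)}\,dx=h_{p\ell,\,q\ell}(t,\omega),
\]
and Lemma \ref{Lemma F4} with $\alpha=p\ell$, $\beta=q\ell$ gives $\|g_1(\cdot,t)\|_{L^\ell(D)}\le C_{p\ell,q\ell}(T)^{1/\ell}$. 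Setting $C_\ell(T):=\max\{C_{p\ell,q\ell}(T)^{1/\ell},\,C_{r\ell,s\ell}(T)^{1/\ell}\}$ then settles both $j=1,2$ uniformly in $t$. (One could equally use $\gamma^q+b\ge b$ to obtain $\|g_1\|_{L^\ell(D)}^\ell\le b^{-\ell}h_{p\ell,0}(t,\omega)$, removing the role of $q$ entirely.)

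The one point that needs care is the admissibility requirement $\alpha>1$ in Lemma \ref{Lemma F2}: here $\alpha=p\ell$ and $\alpha=r\ell$, which exceed $1$ as soon as $p,r\ge1$, and in any case for all sufficiently large $\ell$, the regime relevant to the regularity bootstrap for global existence. I do not expect this to be a genuine obstacle: in the borderline situation $p\ell\le1$ or $r\ell\le1$ I would first apply Hölder's inequality on the bounded domain, $h_{\alpha,\beta}(t)\le|D|^{1-1/m}\,h_{\alpha m,\,\beta m}(t)^{1/m}$ for an integer $m$ with $\alpha m>1$, and then invoke Lemma \ref{Lemma F4} at the enlarged exponents $(\alpha m,\beta m)$. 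Finally I would note that on the event $E$ the resulting constant is in fact deterministic, because the $\omega$-dependence in Lemma \ref{Lemma F4} enters only through $\sup_{0\le t\le T}|B_t|\le K$ and through the nonnegative integral $\int_0^T v(s)\,ds$, which appears with a favorable sign; thus the estimate holds uniformly over $\omega\in E$ with a constant depending only on $T$, $K$, $\ell$, and the data.
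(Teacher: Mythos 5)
Your proof is correct and is essentially the paper's argument: the paper's proof is a single line citing Lemma \ref{Lemma F4}, and your identification of $\|g_2(\cdot,t)\|_{L^\ell(D)}^\ell$ with $h_{r\ell,\,s\ell}$ and (after dropping $b$) $\|g_1(\cdot,t)\|_{L^\ell(D)}^\ell$ with $h_{p\ell,\,q\ell}$ is exactly the intended way that lemma applies, with your reading of $L^\ell(\Omega)$ as the spatial norm confirmed by how the corollary is invoked with $\|\cdot\|_{L^2(D)}$ in the proof of Theorem \ref{theorem1}. Your additional care about the admissibility condition $\alpha>1$ (handled by H\"older's inequality on the bounded domain) and about the constant being deterministic on $E$ goes beyond what the paper records, but it supplements rather than departs from the paper's approach.
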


\begin{proof}[Proof of Theorem 1.1]
Under the conditions in Lemma \ref{Lemma F2} and the set $E$ defined in \eqref{set E} up to negligible set, using \eqref{estimate for the norm C}, \eqref{operator setting 1} and Corollary \ref{cor}, we have that for all $0\leq  t\leq T$,
\begin{align}\label{operator setting 11}
\nonumber \|A(t)\|_{L^{2}(D)}\leq&\|S(t)A_{0}\|_{L^{2}(D)}+\int_{0}^{t}\Big\|S(t-u)\Big(\frac{A^{p}(u)}{\gamma^{q}(u)+b}\Big)\Big\|_{L^{2}(D)}du\\
\nonumber \leq& \|A_{0}\|_{L^{2}(D)}+T\Big\|\frac{A^{p}(u)}{\gamma^{q}(u)+b}\Big\|_{L^{2}(D)} \\
\leq& \|A_{0}\|_{L^{2}(D)}+TC_2(T)
\end{align}
In addition, one is able to obtain the estimate \eqref{estimate for gamma} from  \eqref{estimate H 2}. With these estimates, the unique local solution obtained in Proposition 2.1 may now be continued indefinitely to obtain a global solution. The proof is complete.
\end{proof}


\begin{thebibliography}{8}
\addcontentsline{toc}{section}{References}


\expandafter\ifx\csname natexlab\endcsname\relax\def\natexlab#1{#1}\fi
\providecommand{\bibinfo}[2]{#2}
\ifx\xfnm\relax \def\xfnm[#1]{\unskip,\space#1}\fi
\bibitem{Trembley} A. Trembley, {\em Memoires pour servir a l'histoire d'un genre de polypes d'eau douce, a bras en forme de cornes,} (1744)
 \bibitem{Turing} A.M. Turing, {\em The chemical basis of morphogenesis.} Phil. Trans. Roy. Soc. Lond. B 237, 37-72 (1952).
 \bibitem{Gierer} A. Gierer and H. Meinhardt,  {\em A theory of biological pattern formation.}  Kybernetik (Berlin) 12, 30-39, (1972).
 \bibitem{Takashi} T. Miura, P.K. Maini, {\em Periodic pattern formation in reaction-diffusion systems: An introduction for numerical simulation.} Anatomical Science International 79, 112-123 (2004).
  \bibitem{Rothe} F. Rothe, {\em Global Solutions of Reaction-Diffusion Systems,} Lecture Notes in Math., 1072, Springer- Verlag, New York, (1984).
\bibitem{Masuda} K. Masuda and K. Takahashi,  {\em Reaction-diffusion systems in the Gierer-Meinhardt theory of biological pattern formation.} Japan J. Appl. Math. 4, no. 1, 47-58 (1987).
\bibitem{Dillon} R. Dillon, P.K. Maini,  and H.G. Othmer,   {\em Pattern formation in generalized Turing systems. I. Steady-state patterns in systems with mixed boundary conditions.}  J. Math. Biology  32, 345-393 (1994).
 \bibitem{Henghui} H. Zou, {\em On global existence of solutions for the Gierer-Meinhardt system.} Discrete Contin. Dyn. Syst. 35, no. 1, 583-591, (2015).
 \bibitem{Jiang} H. Jiang, {\em Global existence of solutions of an activator-inhibitor systems.} Discrete Contin. Dyn. Syst. 14, no. 4, 737-751 (2006).
 \bibitem{Chen} S. Chen, Y. Salmaniw, and R. Xu, {\em Global existence for a singular Gierer-Meinhardt system.} J. Differential Equations, volume 262, 2940-2690 (2017).
 \bibitem{Antwi} K. Antwi-Fordjour and M. Nkashama, {\em Global Existence of Solutions of the
Gierer-Meinhardt System with Mixed Boundary Conditions.} Appl. Math., 8, 857-867 (2017).
 \bibitem{Keener} J.P. Keener, {\em Activators and inhibitors in  pattern formation.}  Stu. Appl. Math. 59, 1-23 (1978).
 \bibitem{Li} F. Li and W.-M. Ni, {\em On the global existence and finite time blow-up of shadow systems.} J.
Differential Equations, 247, no 6, 1762-1776 (2009).
 \bibitem{Phan}  T. Phan, {\em  A remark on global existence of solutions of shadow systems, Z. Angew.}  Math.Phys. 63, no 2, 395-400, (2012).
 \bibitem{Maini} P.K. Maini, J. Wei and M. Winter, {\em Stability of spikes in the shadow Gierer-Meinhardt system with Robin boundary conditions.} Chaos 17, 037106 (2007).
 \bibitem{Kelkel} J. Kelkel and C. Surulescu, {\em On a stochastic reaction-diffusion system modeling pattern formation
on seashells.}  J. Math. Biology, 60, 765-796 (2010).
 \bibitem{Matthias} M. Winter, L. Xu, J. Zhai, T. Zhang, {\em The dynamics of the stochastic shadow Gierer-Meinhardt system.} Journal of Differential Equations, 260, 84-114 (2016).
 \bibitem{Li 1} F. Li and L. Xu, {\em Finite time blowup of the stochastic shadow
Gierer-Meinhardt systmem.} Electronic communications in probability, 20, paper no. 65, 13 pp. (2015).
\bibitem{Billingsley}  P. Billingsley, {\em Convergence of probability measures.} Wiley-Interscience
publication (1999).
\bibitem{Durrett} R. Durrett, {\em Probability: theory and examples.} Cambridge University Press, forth edition, (2010).
\bibitem{Resnick} S. Resnick, {\em Adventures in stochastic processes.}  Birkhuser Boston, Inc. (1992).
 \bibitem{Henry} D. Henry, {\em Geometric Theory of Semilinear Parabolic Equations.} Lecture Notes in Math., 840, Springer-Verlag, New York, (1981).
\bibitem{Pazy} A. Pazy,  {\em Semigroups of Linear Operators and Applications to Partial Differential Equations.} Applied Mathematical Science, 44, Springer-Verlag, New York (1983).
 \bibitem{Friedman} A. Friedman, {\em Partial Differential Equations.} Holt-Reinhart-Winston (1969).

\end{thebibliography}
\end{document}